\newtheorem{theo}{Theorem}[section]
\newtheorem{prop}[theo]{Proposition}
\newtheorem{defi}[theo]{Definition}
\newtheorem{lemm}[theo]{Lemma}
\newtheorem{conj}[theo]{Conjecture}
\newcommand{\mb}{\mathbb}
\newcommand{\mc}{\mathcal}
\newcommand{\mf}{\mathfrak}
\newcommand{\ra}{\rightarrow}
\DeclareMathOperator{\Spa}{Spa}
\DeclareMathOperator{\Gal}{Gal}
\DeclareMathOperator{\Frob}{Frob}
\DeclareMathOperator{\Ind}{Ind}
\DeclareMathOperator{\GL}{GL}
\DeclareMathOperator{\Rep}{Rep}
\DeclareMathOperator{\ord}{ord}
\DeclareMathOperator{\sss}{ss}
\DeclareMathOperator{\HT}{HT}
\title{On non-abelian Lubin-Tate theory and analytic cohomology}
\date {\today}
\author{Przemys\l aw Chojecki }
\email{chojecki@math.jussieu.fr}
\begin{document}

\begin{abstract} We prove that the $p$-adic local Langlands correspondence for $\GL_2(\mb{Q}_p)$ appears in the \'etale cohomology of the Lubin-Tate tower at infinity. We use global methods using recent results of Emerton on the local-global compatibility and hence our proof applies to local Galois representations which come via a restriction from a global pro-modular Galois representations. We also discuss a folklore conjecture which states that the $p$-adic local Langlands correspondence appears in the de Rham cohomology of the Lubin-Tate tower (Drinfeld tower). We show that a study of the de Rham cohomology for perfectoid spaces reduces to a study of the analytic cohomology and we state a natural conjecture related to it.
\end{abstract}

\maketitle
\tableofcontents

\section{Introduction}

\noindent The $p$-adic Langlands program, started by Christophe Breuil and developed largely by Laurent Berger, Pierre Colmez, Matthew Emerton and Mark Kisin, has as a goal to establish a correspondence between $p$-adic Galois representations and representations of $p$-adic reductive groups on $p$-adic Banach spaces. It has (and will have) many applications, for example the Fontaine-Mazur conjecture (see \cite{em2}). Unfortunately, at the moment the $p$-adic correspondence is constructed (mostly by Colmez) only for $\GL_2(\mb{Q}_p)$ and it seems hard to generalize it to other groups because of many algebraic obstacles. 

\medskip

\noindent Hence it seems natural to try to find the correspondence in some appropiate cohomology groups as was done for the classical Langlands correspondence. We are interested in the $p$-adic completed and analytic cohomologies of Shimura varieties and Rapoport-Zink spaces, which are natural objects to consider in the context of the Langlands program. This paper might be seen as a sequel to \cite{cho}, where we have studied the mod $p$ \'etale cohomology of the Lubin-Tate tower. Here we turn to the study of the $p$-adic completed and analytic cohomologies.

\medskip

\noindent There are two goals which we want to accomplish in this short paper. The first one is to show a result analogous to the one obtained in \cite{cho}, namely to show that the $p$-adic local Langlands correspondence for $\GL_2(\mb{Q}_p)$ appears in the \'etale cohomology of the Lubin-Tate tower at infinity. The methods we use are partly those of \cite{cho} (localisation at a supersingular representation; use of the local-global compatibility of Emerton), though we approach them differently by working in the setting of adic spaces (we have worked with Berkovich spaces in \cite{cho}). This gives us more freedom as we can work directly at the infinite level (modular curves at the infinite level; Lubin-Tate tower at the infinite level) thanks to the work of Scholze on perfectoid spaces (\cite{sch1}, \cite{sch3}, \cite{sw}). In this way, we do not need anymore to pass to the limit in the cohomology, as working at the infinite level is the same as working with the completed cohomology (see Chapter IV of \cite{sch3} for torsion coefficients). We prove our main result (Theorem 4.3) for local Galois representations $\rho _p$ which are restrictions of some global pro-modular (a notion from \cite{em2}) representations $\rho$ and such that the mod $p$ reduction $\bar{\rho}_p$ is absolutely irreducible. We need these assumptions in order to be able to use the main result of \cite{em2}.

\medskip

\noindent The second goal of this text is to discuss the folklore conjecture which roughly states that the $p$-adic local Langlands correspondence appears in the de Rham cohomology of the Drinfeld tower. As far as we know, this conjecture is not stated anywhere explicitely in the literature, though there was some work done towards it. The reader should consult \cite{schr} for some partial progress at the 0-th level of the tower. Thanks to the work of Scholze-Weinstein (\cite{sw}) we can work directly at the infinite level which we do. Moreover, because of the duality of Rapoport-Zink spaces at the infinite level (which goes back to Faltings; see Section 7 of \cite{sw}), we know that the Drinfeld space at infinity $\mc{M}_{Dr, \infty}$ is isomorphic to the Lubin-Tate space at infinity $\mc{M} _{LT, \infty}$ and hence we can consider only the Lubin-Tate tower which is easier to relate to modular curves.

\medskip

\noindent As to the folklore conjecture, we give a short argument at the beginning of Section 4, which explains why the de Rham cohomology of $\mc{M} _{LT, \infty}$ simplifies greatly. The reason is that for any perfectoid space $X$ (hence for $\mc{M} _{LT, \infty}$ after \cite{sw}) the cohomology groups of $j$-th differentials $H^i(X, \Omega ^j _X)$ vanishes for $j>0$ and any $i$. This reduces the study of the de Rham cohomology to the study of the cohomology of the structure sheaf (which we refer to as the analytic cohomology - with topology defined by open subsets) which should be a good substitute for the de Rham cohomology in the setting of perfectoid spaces. We state the folklore conjecture for the analytic cohomology in the last section.

\medskip

\noindent At the end we remark that one problem with the de Rham cohomology for perfectoid spaces, if one would like to define it in some meaningful way, is the lack of finiteness result. We should mention the work of Cais (\cite{cai}), where the author consider integral structures on the de Rham cohomology of curves. The aim is to $p$-adically complete the de Rham cohomology of the tower of modular curves, as was done with the \'etale cohomology by Emerton (\cite{em1}). It seems interesting to determine what one would get by applying his construction at each finite level and then passing to the limit and how it would relate to the de Rham cohomology of the modular curve at the infinite level.

\medskip

\noindent \textbf{Acknowledgements.} I thank Christophe Breuil, Jean-Francois Dat, David Hansen, Peter Scholze and Jared Weinstein for useful discussions and correspondence related to this text.

\section{Modular curves at infinity}

In this section we review the geometric background which we use. We describe modular curves (and their compactifications) at the infinite level and we deal with the ordinary locus and the supersingular locus. We will use the language of adic spaces for which the reader should consult \cite{hu} and \cite{sch1}.

\medskip

We let $E$ be a finite extension of $\mb{Q}_p$ with the ring of integers $\mc{O}$ and the residue field $k = \mc{O} / \varpi $ where $\varpi$ is a uniformiser. This is our coefficient field.

\subsection{Geometry of modular curves}

We denote open modular curves over $\mb{C}$ for an open compact subset $K \subset \GL_2(\mb{A}_f)$ by
$$Y(K) = \GL_2(\mb{Q}) \backslash (\mb{C} \backslash \mb{R}) \times \GL_2(\mb{A}_f) / K$$ 
There is a canonical algebraic model of it over $\mb{Q}$. We fix some complete and algebraically closed extension $C$ of $\mb{Q}_p$. Let $\mc{O}_C$ be the ring of integers of $C$. We consider modular curves as adic spaces over $\Spa (C, \mc{O}_{C})$ which we may do after base-changing each $Y(K)$.

We let $X(K)$ be the compactification of $Y(K)$, which we also consider as an adic space over $\Spa(C, \mc{O}_{C})$. We will work with modular curves at the infinite level. We recall Scholze's results. We use $\sim$ in the sense of Definition 2.4.1 in \cite{sw}.

\begin{theo} For any sufficiently small level $K^p \subset \GL_2(\mb{A}_f^p)$ there exist adic spaces $Y(K^p)$ and $X(K^p)$ over $\Spa(C, \mc{O}_{C})$ such that
$$Y(K^p) \sim \varprojlim _{K_p} Y(K_pK^p)$$
$$X(K^p) \sim \varprojlim _{K_p} X(K_pK^p)$$
where $K_p$ runs over open compact subgroups of $\GL_2(\mb{Q}_p)$.
\end{theo}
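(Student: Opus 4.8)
The plan is to follow Scholze and prove the stronger statement that each of the two inverse limits is representable by a \emph{perfectoid} space over $\Spa(C,\mc{O}_C)$; the relation $\sim$ of \cite{sw} then follows essentially formally, since by construction the underlying topological space of the candidate is the inverse limit of the underlying spaces and its structure sheaf is the completed colimit of the structure sheaves pulled back from finite level. Because being perfectoid may be tested on an affinoid cover, and because this property is stable under the Hecke action of $\GL_2(\mb{Q}_p)$ on the tower, the whole problem reduces to exhibiting a single well-chosen affinoid whose preimage at infinite level is affinoid perfectoid, and then translating it across the space by the group action to cover everything.

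The central step is the analysis of the anticanonical tower over the ordinary locus. Fixing a small $\epsilon > 0$, I would consider inside $X(K_pK^p)$ the locus where the universal (generalised) elliptic curve has Hasse invariant of valuation $< \epsilon$, together with a choice of anticanonical subgroup; this is an affinoid $X(K_pK^p)_a(\epsilon)$ at each finite level. The key geometric input is the theory of the canonical subgroup: on such a strict neighbourhood of the ordinary locus the forgetful transition map from level $p^{n+1}$ to level $p^n$ becomes, after reduction modulo $p$, the relative Frobenius on the special fibre. Consequently, after tilting, the tower carries an honest surjective Frobenius, and Scholze's criterion shows that $\varprojlim_{K_p} X(K_pK^p)_a(\epsilon)$ is affinoid perfectoid; concretely one verifies that the completion of the colimit of the coordinate rings is a perfectoid Tate ring. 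Read on the open curve, the same analysis handles $Y(K^p)$.

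Next I would propagate perfectoidness to the entire tower. The Hecke correspondences coming from $\GL_2(\mb{Q}_p)$ move the anticanonical locus across the adic space, and finitely many of its translates cover all of $Y(K^p)$, so that $Y(K^p)$ is perfectoid by gluing; the non-ordinary, in particular supersingular, region is reached in this way, which is also where the tower is related to the Lubin--Tate tower. For the compactification $X(K^p)$ one must in addition control the boundary: near the cusps the universal object degenerates to a Tate curve, and an essentially identical Frobenius/anticanonical argument, or a direct computation with the Tate parameter, shows that the limit remains affinoid perfectoid there, so that $X(K^p)$ is perfectoid and contains $Y(K^p)$ as the complement of the cuspidal locus.

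The hard part, I expect, is the perfectoidness of the anticanonical tower, i.e. making precise the statement that the transition maps are governed by Frobenius and then deducing the perfectoid criterion from it. This requires the full force of the theory of the canonical subgroup and careful bookkeeping of the integral models and their reductions, including the behaviour as $\epsilon \to 0$. The remaining difficulties are more formal: verifying that the cusps do not obstruct perfectoidness, checking the $\sim$-conditions (the homeomorphism of underlying spaces and the density of the pulled-back functions), and confirming compatibility of the $\GL_2(\mb{Q}_p)$-action with the gluing.
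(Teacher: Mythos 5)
Your proposal is correct and takes essentially the same route as the paper: the paper's entire proof is a citation of Scholze (Theorem III.1.2 of \cite{sch3}), and your sketch --- the anticanonical tower over the small-Hasse-invariant locus made perfectoid via canonical subgroups and Frobenius after tilting, then propagation to the whole space by $\GL_2(\mb{Q}_p)$-translates, with separate treatment of the cusps --- is precisely the strategy of Scholze's proof of that cited theorem. The one caveat is that two points you dismiss as ``more formal'' are substantive in \cite{sch3}: adding full $\Gamma(p^\infty)$-level structure over the anticanonical locus (needed before any $\GL_2(\mb{Q}_p)$-action is even available) requires almost purity, and the assertion that translates of a neighbourhood of the ordinary locus actually reach the whole supersingular region is a genuinely nontrivial step, proved there using the Hodge--Tate period map rather than by inspection.
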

\begin{proof}
See Theorem III.1.2 in \cite{sch3}. 
\end{proof}

In what follows we will write $Y= Y(K^p)$ and $X=X(K^p)$, having fixed one tame level $K^p$ throughout the text. 

\medskip

%\begin{prop}
%There exist adic spaces $Y$ and $X$ over $\Spa(C, \mc{O}_{C})$ such that
%$$Y \sim \varprojlim _{K^p} Y(K^p) \sim \varprojlim _{K_pK^p} Y(K_p K^p)$$
%$$X \sim \varprojlim _{K^p} X(K^p) \sim \varprojlim _{K_pK^p} X(K_p K^p)$$ 
%where $K^p$ runs over sufficiently small compact open subgroups of $\GL_2(\mb{A} _f ^p)$ and $K_p$ runs over open compact subgroups of $\GL_2(\mb{Q}_p)$.
%\end{prop}

For the maximal compact open subgroup $\GL_2(\mb{Z}_p)$ we can define the supersingular locus $Y(\GL_2(\mb{Z}_p)K^p)_{\sss}$ (respectively, the ordinary locus $Y(\GL_2(\mb{Z}_p)K^p)_{\ord}$) as the inverse image under the reduction of the set of supersingular points (resp. closure of the inverse image of the ordinary locus) in the special fiber of $Y(\GL_2(\mb{Z}_p) K^p)$. Then for any compact open subgroup $K_p \subset \GL_2(\mb{Z}_p)$, we define $Y(K_pK^p)_{\sss}$ (resp. $Y(K_pK^p)_{\ord}$) as the pullback of $Y(\GL_2(\mb{Z}_p)K^p)_{\sss}$ (resp. $Y(\GL_2(\mb{Z}_p)K^p)_{\ord}$). Hence $Y(K_pK^p)_{\ord}$ is the complement of $Y(K_pK^p)_{\sss}$ and hence a closed subspace of $Y(K_pK^p)$. We define similarly the supersingular locus $X(K_pK^p)_{\sss}$ and the ordinary locus $X(K_pK^p)_{\ord}$ of $X(K_pK^p)$. Using the pullback from the finite level, we define also $X_{\sss},Y_{\sss},X_{\ord},Y_{\ord}$ at the infinite level. The reader may consult the discussion in \cite{sch3} which appears after Theorem III.1.2.

%Recall that there is a reduction map from an adic space to the special fiber of the corresponding formal model. Indeed, if $x$ is a point of $\Spa(A,A^+)$, then it gives rise to a continuous character $\chi _x: A \ra k(x)$, where $k(x)$ is the quotient field associated to $x$ (see p.461 in \cite{hu1}). By composing it with the projection to the residue field $\overline{k(x)}$ of $k(x)$ we get a character $\bar{\chi}_x : A / I \ra \overline{k(x)}$, where $I$ is an ideal of definition of $A$. The kernel of $\bar{\chi}_x$ is a prime ideal in $\Spec A/I$ as wanted. 
%We define $Y(K_pK^p)_{ss}$ as the inverse image under the reduction of the set of supersingular points in the special fiber of $Y(K_pK^p)$. It is an open subspace of $Y(K_pK^p)$. We define $Y(K_pK^p)_{ord}$ as the closure of the inverse image under the reduction of the ordinary locus in the special fiber. This is the complement of $Y(K_pK^p)_{ss}$ and hence a closed subspace of $Y(K_pK^p)$. We define similarly the supersingular locus $X(K_pK^p)_{ss}$ and the ordinary locus $X(K_pK^p)_{ord}$ of $X(K_pK^p)$.

\begin{theo}\label{locus-infty}
There exist adic spaces $Y _{\sss}$, $Y_{\ord}$ and $X _{\sss}$, $X_{\ord}$ over $\Spa(C, \mc{O}_{C})$ such that
$$Y _{\sss} \sim  \varprojlim _{K_p} Y(K_p K^p) _{\sss}$$
$$Y _{\ord} \sim \varprojlim _{K_p} Y(K_p K^p) _{\ord}$$ 
and similarly for $X_{\sss}$ and $X_{\ord}$. Here $K_p$ runs over open compact subgroups of $\GL_2(\mb{Q}_p)$.
\end{theo}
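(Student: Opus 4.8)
The plan is to realise each of the four loci at infinite level as the pullback of the corresponding locus at the level $\GL_2(\mb{Z}_p)K^p$ along the projection furnished by Theorem 2.1, and then to check that this pullback is compatible with the tilde-limit structure in the sense of Definition 2.4.1 of \cite{sw}. Concretely, write $\pi \colon Y \to Y(\GL_2(\mb{Z}_p)K^p)$ for the canonical map coming from $Y \sim \varprojlim_{K_p} Y(K_p K^p)$, and recall that $Y(\GL_2(\mb{Z}_p)K^p)_{\sss}$ is open in $Y(\GL_2(\mb{Z}_p)K^p)$ with closed complement $Y(\GL_2(\mb{Z}_p)K^p)_{\ord}$. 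I would set $Y_{\sss} := \pi^{-1}(Y(\GL_2(\mb{Z}_p)K^p)_{\sss})$ and $Y_{\ord} := \pi^{-1}(Y(\GL_2(\mb{Z}_p)K^p)_{\ord})$, so that $Y_{\sss}$ is open and $Y_{\ord}$ is closed in $Y$; these are consistent with the finite-level definitions, since the latter are themselves pullbacks from the $\GL_2(\mb{Z}_p)$-level. The same recipe applied to $X$ produces $X_{\sss}$ and $X_{\ord}$.

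The heart of the argument is the following soft statement about the relation $\sim$: if $X \sim \varprojlim_i X_i$, if $0$ is a fixed index, and if $U_0 \subseteq X_0$ is an open (resp. closed) subspace, then, setting $U_i$ and $U$ to be the preimages of $U_0$ in $X_i$ and $X$ respectively, one has $U \sim \varprojlim_i U_i$. I would verify the three conditions of Definition 2.4.1 in turn. The compatible family of maps $U \to U_i$ is obtained by restricting the maps $X \to X_i$. For the topological condition, I would use that forming preimages commutes with inverse limits: under the homeomorphism $|X| \xrightarrow{\sim} \varprojlim_i |X_i|$, a point lies in $|U|$ if and only if its image in $|X_0|$ lies in $|U_0|$, which by compatibility is equivalent to its image lying in $|U_i|$ for every $i$, so $|U| \xrightarrow{\sim} \varprojlim_i |U_i|$.

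The remaining, and most delicate, point is the density condition on structure sheaves, and this is where I expect the main difficulty to lie. In the open case it is essentially formal: the affinoids $\Spa(A, A^+) \subseteq X$ witnessing $X \sim \varprojlim_i X_i$ can be refined (by rational localisation) so as to be contained in $U$ and to arise from affinoids $\Spa(A_i, A_i^+) \subseteq U_i$, and the density of $\varinjlim_i A_i$ in $A$ is preserved under rational localisation; this settles $Y_{\sss}$ and $X_{\sss}$. The closed case, needed for the ordinary loci $Y_{\ord}$ and $X_{\ord}$, is more subtle, because the structure sheaf of a closed adic subspace is not a localisation of the ambient one, so density of the transition maps on affinoid algebras does not restrict as transparently. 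Here I would argue locally on an affinoid $\Spa(A_0, A_0^+) \subseteq X_0$ meeting $U_0$, describe $U_0$ there by an ideal, pull this description through the tower, and check that the induced maps on the quotient algebras still have dense image after completion; this reduces to the fact, already implicit in Scholze's discussion following Theorem III.1.2 of \cite{sch3}, that the construction of the infinite-level space is compatible with passage to the special-fibre strata. Finally, the identical argument applied to the proper space $X$ and its projection to $X(\GL_2(\mb{Z}_p)K^p)$ yields the statements for $X_{\sss}$ and $X_{\ord}$.
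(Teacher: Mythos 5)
Your proposal is correct and takes essentially the same route as the paper: the paper's entire proof is a one-line citation of Proposition 2.4.3 of \cite{sw}, which is precisely the ``soft statement'' about stability of $\sim$ under pullback from a fixed finite level that you verify by hand, applied to the loci at level $\GL_2(\mb{Z}_p)K^p$. Note, moreover, that the cited proposition is stated for \'etale morphisms (so it covers the open, supersingular case but not literally the closed, ordinary one), so the difficulty you flag for $Y_{\ord}$ and $X_{\ord}$ is a real gap that the paper's citation itself glosses over, and your extra discussion of that case is the only substantive point going beyond the paper's argument.
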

\begin{proof} Follows from Proposition 2.4.3 in \cite{sw}.
\end{proof}

One of the main results of \cite{sch3} (Theorem III.1.2), is the construction of the Hodge-Tate period map $\pi _{\HT}$ which is a $\GL_2(\mb{Q}_p)$-equivariant morphism
$$\pi _{\HT} : X \ra (\mb{P} ^1)^{ad}$$
where $(\mb{P} ^1)^{ad}$ is the adic projective line over $\Spa(C, \mc{O}_{C})$. This morphism commutes with Hecke operators away from $p$ for the trivial action of these Hecke operators on $(\mb{P}^1)^{ad}$. Moreover, the decomposition of $X$ into the supersingular and the ordinary locus can be seen at the flag variety level. Namely, we have (see the discussion after Theorem III.1.2 in \cite{sch3})
$$X_{\ord} = \pi ^{-1} _{\HT} (\mb{P}^1 (\mb{Q}_p))$$
$$X_{\sss} = \pi ^{-1} _{\HT} ((\mb{P}^1)^{ad} \backslash \mb{P}^1 (\mb{Q}_p))$$

\medskip

We let 
$$j: X _{\sss} \hookrightarrow X$$
denote the open immersion and we put
$$i: X _{\ord} \ra X$$
For any injective \'etale sheaf $I$ on $X$ we have an exact sequence of global sections
$$0\ra \Gamma _{X_{\ord}}(X, I) \ra \Gamma(X,I) \ra \Gamma (X_{\sss}, j^*I) \ra 0$$
which gives rise to the exact sequence of \'etale cohomology for any \'etale sheaf $F$ on $X$ (take an injective resolution $I^{\bullet}$ of $F$ and apply the above exact sequence to it) 
$$... \ra H^0 (X_{\sss}, j^* F) \ra H^1 _{X_{\ord}} (X, F) \ra H^1 (X, F) \ra H^1 (X_{\sss}, j^* F) \ra  ...$$
By specialising $F$ to a constant sheaf $\mc{O} / \varpi ^s \mc{O}$ ($s>0$) we get an exact sequence
$$... \ra H^0 (X_{\sss}, \mc{O} / \varpi ^s \mc{O}) \ra H^1 _{X_{\ord}} (X, \mc{O} / \varpi ^s \mc{O}) \ra H^1 (X, \mc{O} / \varpi ^s \mc{O}) \ra H^1 (X_{\sss}, \mc{O} / \varpi ^s \mc{O}) \ra  ...$$
We can obtain an analogous exact sequence for analytic cohomology which we review later. In what follows we will be interested in the $p$-adic completed cohomology, introduced by Emerton in \cite{em1}. We define
$$H^i (X, E) = \left( \varprojlim _{s} H^i _{et}(X,\mc{O} / \varpi ^s \mc{O}) \right) \otimes _{\mc{O}} E$$
Using the fact that $X \sim \varprojlim _{K_p} X(K_p K^p)$ and Theorem 7.17 in \cite{sch1}, we have
$$H^i (X, E) = \left( \varprojlim _{s} \varinjlim _{K_p} H^i _{et}(X(K_pK^p), \mc{O} / \varpi ^s \mc{O}) \right) \otimes _{\mc{O}} E$$ 
which is precisely the $p$-adic completed cohomology of Emerton. We use similar definitions for $X_{\sss}$ and $X_{\ord}$. 

%\medskip

%We remark, that because the pro-\'etale site and the \'etale site with constant coefficients at the finite level will give the same cohomology groups, we have an equality
%$$ H^i(X,E) = H^i(X_{pro-et}, \widehat{\mc{O}}) \otimes _{\mc{O}} E$$
%where $X_{pro-et}$ is the pro-\'etale site of $X$ and $\widehat{\mc{O}} = \varprojlim _s \mc{O} / \varpi ^s \mc{O}$ is a sheaf on $X_{pro-et}$. We refer the reader to \cite{sch2} for definitions.

%We have, as $H^*(X,K)$ gives a cohomology theory, an exact sequence
%$$... \ra H^0 (X_{\sss}, K) \ra H^1 _{X_{\ord}} (X, K) \ra H^1 (X, K) \ra H^1 (X_{\sss}, K) \ra  ...$$
%which will be crucial to us later on.

\subsection{Ordinary locus}

We recall the decomposition of the ordinary locus, which implies that representations arising from the cohomology are induced from a Borel subgroup. This is a classical and well-known result, but we shall give it a short proof using recent results of Scholze and the fact that we are working at the infinite level. We have given a different proof in Section 2.2 of \cite{cho}.

\begin{prop}
The \'etale (and also analytic) cohomology of $X_{\ord}$ is induced from a Borel subgroup $B(\mb{Q}_p)$ of upper-triangular matrices in $\GL_2(\mb{Q}_p)$
$$H^i _{X_{\ord}}(X, F) = \Ind  _{B(\mb{Q}_p)} ^{\GL_2(\mb{Q}_p)} W(F)$$
where $F= \mc{O}/\varpi ^s \mc{O}$ is an \'etale constant sheaf on $X_{\ord}$ and $W(F)$ is a certain cohomology space defined below in the proof which depends on $F$ and admits an action of $B(\mb{Q}_p)$. The induction appearing above is the smooth induction.
\end{prop}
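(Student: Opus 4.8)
The plan is to deduce the statement from the geometry of the Hodge-Tate period map together with the identification of the fibres of $\pi_{\HT}$ over the rational points. Recall from the discussion above that $X_{\ord} = \pi_{\HT}^{-1}(\mb{P}^1(\mb{Q}_p))$ and that this identification is $\GL_2(\mb{Q}_p)$-equivariant, the action on the target being the usual action on the flag variety. Since $\GL_2(\mb{Q}_p)$ acts transitively on $\mb{P}^1(\mb{Q}_p)$ with the stabiliser of the point $\infty = [1:0]$ equal to the Borel $B(\mb{Q}_p)$ of upper-triangular matrices, we obtain a $\GL_2(\mb{Q}_p)$-equivariant identification $\mb{P}^1(\mb{Q}_p) \cong \GL_2(\mb{Q}_p)/B(\mb{Q}_p)$. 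First I would use this to view $X_{\ord}$ as fibred over $\mb{P}^1(\mb{Q}_p)$, the fibres $\pi_{\HT}^{-1}(x)$, $x \in \mb{P}^1(\mb{Q}_p)$, being permuted transitively by $\GL_2(\mb{Q}_p)$ according to its action on $\GL_2(\mb{Q}_p)/B(\mb{Q}_p)$.

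Next I would define the coefficient space. Let $C_{K_p} \subset X(K_pK^p)_{\ord}$ denote the connected component corresponding to the trivial double coset in $B(\mb{Q}_p)\backslash \GL_2(\mb{Q}_p)/K_p$, that is, the image of the fibre $\pi_{\HT}^{-1}(\infty)$, and set $W(F) = \varinjlim_{K_p} H^i_{C_{K_p}}(X(K_pK^p), F)$. Because $B(\mb{Q}_p)$ fixes the point $\infty$, it preserves this fibre at the infinite level and therefore acts on the direct limit $W(F)$; this defines the promised $B(\mb{Q}_p)$-representation. The remaining content is to assemble the cohomology supported on all of the fibres into a single induced representation.

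For the assembly I would pass through finite level. Using $X \sim \varprojlim_{K_p} X(K_pK^p)$ and the analogous statement for the ordinary locus (Theorem \ref{locus-infty}), together with the comparison for cohomology at the infinite level, one has $H^i_{X_{\ord}}(X,F) = \varinjlim_{K_p} H^i_{X(K_pK^p)_{\ord}}(X(K_pK^p), F)$, and likewise for the analytic cohomology. At each finite level the ordinary locus is a finite disjoint union of components indexed by $B(\mb{Q}_p)\backslash \GL_2(\mb{Q}_p)/K_p$, so cohomology with supports splits as a finite direct sum over these double cosets; by $\GL_2(\mb{Q}_p)$-equivariance the summand attached to a double coset $B(\mb{Q}_p) g K_p$ is identified, via translation by $g$, with the subspace $W(F)^{B(\mb{Q}_p)\cap gK_pg^{-1}}$ of invariants. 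This is exactly the Mackey description of $(\Ind_{B(\mb{Q}_p)}^{\GL_2(\mb{Q}_p)} W(F))^{K_p}$. Passing to the limit over $K_p$, and using that $\mb{P}^1(\mb{Q}_p)$ is compact so that a smooth section is locally constant with only finitely many values, identifies $H^i_{X_{\ord}}(X,F)$ with the smooth induction $\Ind_{B(\mb{Q}_p)}^{\GL_2(\mb{Q}_p)} W(F)$. The argument is identical for the \'etale and the analytic cohomology, as it uses only the equivariant geometry and the limit formula.

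The main obstacle I expect lies in the geometric decomposition underlying the first step: justifying the splitting of the cohomology with supports as a direct sum over the individual fibres of $\pi_{\HT}$ over $\mb{P}^1(\mb{Q}_p)$. The rational points do not form a discrete subset of $(\mb{P}^1)^{ad}$, since $\mb{P}^1(\mb{Q}_p)$ is compact and totally disconnected, so the clean separation of the fibres into open-closed pieces only occurs after quotienting to a finite level, where the ordinary locus genuinely breaks into finitely many components. One therefore has to perform the decomposition at finite level and only then pass to the limit, rather than attempting to split $X_{\ord}$ directly at the infinite level; verifying that the $B(\mb{Q}_p)$-action on $W(F)$ and the Mackey matching survive the limit is where the bookkeeping concentrates.
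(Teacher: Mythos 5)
Your proposal is built on the same geometric core as the paper's proof --- the identification $X_{\ord} = \pi_{\HT}^{-1}(\mb{P}^1(\mb{Q}_p))$ and the transitivity of $\GL_2(\mb{Q}_p)$ on $\mb{P}^1(\mb{Q}_p)$ with stabiliser of $\infty$ equal to $B(\mb{Q}_p)$ --- but the execution is genuinely different. The paper never descends to finite level: it writes
$$H^i_{X_{\ord}}(X,F) = H^i(X_{\ord}, i^! F) = H^0\left(\mb{P}^1(\mb{Q}_p), R^i\pi_{\HT,*}(i^!F)\right) = \Ind_{B(\mb{Q}_p)}^{\GL_2(\mb{Q}_p)} H^0\left(\{\infty\}, R^i\pi_{\HT,*}(i^!F)\right),$$
taking $W(F)$ to be the stalk at $\infty$ of $R^i\pi_{\HT,*}(i^!F)$; the middle equality is the degeneration of the Leray spectral sequence over the profinite base (this is what the paper means by ``continuity of $\pi_{\HT}$''), and the last one is the identification of sections of a smooth equivariant sheaf on the compact homogeneous space $\GL_2(\mb{Q}_p)/B(\mb{Q}_p)$ with the smooth induction of its stalk. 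Your finite-level orbit decomposition followed by a colimit is precisely an unwinding of that last identification, and you correctly isolate the point the paper glosses over: the points of $\mb{P}^1(\mb{Q}_p)$ are not isolated in $(\mb{P}^1)^{ad}$, so the clean splitting only exists orbit-by-orbit at finite level. The paper's route is shorter because the pushforward packages this compactness bookkeeping once and for all.

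There is, however, one step of your assembly that is wrong as stated. You identify the finite-level summand attached to $B(\mb{Q}_p)gK_p$ with $W(F)^{B(\mb{Q}_p)\cap gK_pg^{-1}}$, where $W(F)$ is your colimit over levels. For \'etale cohomology with coefficients $\mc{O}/\varpi^s\mc{O}$ this level-wise identification fails: the natural map from the level-$K_p$ cohomology with supports to the invariants of the colimit is in general neither injective nor surjective, because finite-level cohomology differs from the invariants of the colimit cohomology by Hochschild--Serre terms; for instance, inflation classes coming from the group cohomology of $K_p/K_p'$ acting on the deeper levels are killed by pullback, hence map to zero in $W(F)$ while being nonzero at level $K_p$. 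What is true --- and all your argument actually needs --- is the identification \emph{after} passing to the colimit over $K_p$: a class in the colimit is the same thing as a smooth $W(F)$-valued function on $\mb{P}^1(\mb{Q}_p)$ recording its germs along the fibres, and a class all of whose germs vanish dies at some finite level because $\mb{P}^1(\mb{Q}_p)$ is compact, so finitely many orbits suffice. You should therefore replace the level-wise Mackey matching by this germ/stalk matching in the colimit; that is exactly what the paper's definition of $W(F)$ as the stalk $H^0(\{\infty\}, R^i\pi_{\HT,*}(i^!F))$ accomplishes in one stroke.
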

\begin{proof}

Recall that $X_{\ord} = \pi ^{-1} _{\HT} (\mb{P}^1 (\mb{Q}_p))$, where $\pi _{\HT}$ is the Hodge-Tate period map. Let $\infty = (\begin{smallmatrix}1 \\ 0 \end{smallmatrix}) \in \mb{P}^1(\mb{Q}_p)$. The stabilizer of $\infty$ is equal to the Borel subgroup $B(\mb{Q}_p)$ of upper-triangular matrices in $\GL_2(\mb{Q}_p)$. We have
$$H^i _{X_{\ord}}(X, F) = H^i(X_{\ord}, i^!F)=H^0(\mb{P}^1(\mb{Q}_p), R^i \pi _{\HT, *}(i^!F)) = \Ind  _{B(\mb{Q}_p)} ^{\GL_2(\mb{Q}_p)} H^0(\{\infty \}, R^i \pi _{\HT, *}(i^!F))$$
where the second isomorphism follows from the continuity of $\pi _{\HT}$. Those are all smooth spaces, because $H^i(X_{\ord}, i^! F)$ is smooth (by Theorem \ref{locus-infty} and recalling that $F=\mc{O}/\varpi ^s \mc{O}$).

%Let us define for any $a \in \mb{P} ^1(\mb{Q}_p)$ an adic space $\mc{C} _a = \pi ^{-1} _{HT}(a)$. Because $\pi _{HT}$ is $\GL_2(\mb{Q}_p)$-equivariant, spaces $\mc{C}_a$ are as required.
%For the second statement, let $\infty = (\begin{smallmatrix}1 \\ 0 \end{smallmatrix}) \in \mb{P}^1(\mb{Q}_p)$. The stabilizer of $\infty$ is equal to the Borel subgroup $B(\mb{Q}_p)$ of upper-triangular matrices in $\GL_2(\mb{Q}_p)$. Hence $W(F)$ of the statement is equal to $H^i_{\mc{C} _{\infty}}(X, F)$. 
\end{proof}

\subsection{Supersingular locus}

Let us denote by $\mc{M} _{LT, K_p}$ the Lubin-Tate space for $\GL_2(\mb{Q}_p)$ at the level $K_p$, where $K_p$ is a compact open subgroup of $\GL_2(\mb{Q}_p)$. See Section 6 of \cite{sw} for a definition. We just recall that this is a deformation space for $p$-divisible groups with an additional data and it is a local analogue of modular curves. We view it as an adic space over $\Spa (C, \mc{O}_{C})$.

Once again, we would like to pass to the limit and work with the space at infinity. 

\begin{theo}
There exists a perfectoid space $\mc{M} _{LT, \infty}$ over $\Spa (C, \mc{O}_{C})$ such that
$$\mc{M} _{LT, \infty} \sim \varprojlim _{K_p} \mc{M} _{LT, K_p}$$
where $K_p$ runs over compact open subgroups of $\GL_2(\mb{Q}_p)$.
\end{theo}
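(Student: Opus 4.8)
The plan is to obtain this as the simplest instance of the Scholze--Weinstein theory of Rapoport--Zink spaces at infinite level, the Lubin--Tate tower for $\GL_2(\mb{Q}_p)$ being the Rapoport--Zink space of EL type attached to a one-dimensional formal group $\mb{H}$ of height $2$ over $\bar{\mb{F}}_p$. First I would recall the moduli interpretation of the finite-level spaces: the generic fiber $\mc{M}_{LT, K_p}$ parametrizes deformations (up to quasi-isogeny) of $\mb{H}$ equipped with a $K_p$-level structure on the $p$-adic Tate module. As $K_p$ shrinks to $\{1\}$, these level structures assemble into a full trivialization of the rational Tate module, and the natural candidate for the limit is the functor sending an affinoid perfectoid $(C, \mc{O}_C)$-algebra $(R, R^+)$ to the set of triples: a $p$-divisible group $G$ over $R^+$ up to isogeny, a quasi-isogeny from $\mb{H}$ (over the special fiber) to $G$, and an isomorphism $\mb{Q}_p^2 \xrightarrow{\sim} V_p G$.

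The heart of the argument is the Scholze--Weinstein classification of $p$-divisible groups over $\mc{O}_C$: such a group, up to isogeny, is the same datum as a pair $(V, W)$, where $V = V_p G$ is the rational Tate module and $W \subset V \otimes_{\mb{Q}_p} C$ is the Hodge--Tate filtration. For $\mb{H}$ the space $V$ is two-dimensional and $W$ is a line, so once the trivialization $\mb{Q}_p^2 \xrightarrow{\sim} V$ is fixed the only remaining parameter is the point $W \in \mb{P}^1(C)$. This identifies the $C$-points of the candidate functor with $(\mb{P}^1)^{ad}$ fibered by the orbit of trivializations, and realizes the Hodge--Tate period map $\pi_{\HT}$ as the map recording $W$; the residual $\GL_2(\mb{Q}_p)$-action is by change of trivialization, exactly as in the global picture of Section~2.

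It remains to promote this point-set description to an isomorphism of adic spaces and, crucially, to verify that the limit is perfectoid. For this I would use Scholze's inverse-limit criterion: on a suitable affinoid cover one must show that the completed colimit $\bigl(\varinjlim_{K_p} \mc{O}(\mc{M}_{LT, K_p})\bigr)^{\wedge}$ is a perfectoid algebra, i.e.\ that Frobenius is almost surjective modulo $p$ on the integral level. The mechanism is that the transition maps $\mc{M}_{LT, K_p'} \ra \mc{M}_{LT, K_p}$ become finite \'etale once $K_p$ is deep enough, so that passing up the tower adjoins enough $p$-power roots to make the limit perfectoid, after which almost purity controls the integral structure. The main obstacle is precisely this last control of the integral structure in the limit: perfectoidness is not a formal consequence of taking an inverse limit of smooth rigid spaces, and one genuinely needs the classification theorem (and, for the passage to the Drinfeld side, the duality isomorphism $\mc{M}_{Dr, \infty} \cong \mc{M}_{LT, \infty}$ of \cite{sw}) rather than a soft limit argument. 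Since all of this is carried out in full generality in \cite{sw}, the cleanest route is to cite the relevant statement there and specialize it to our rank-two EL datum.
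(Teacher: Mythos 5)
Your proposal is correct and ultimately takes the same route as the paper: the paper's entire proof is a citation of Theorem 6.3.4 of \cite{sw}, together with the remark that $\mc{M}_{LT,\infty}$ is defined as a deformation functor of $p$-divisible groups with a trivialization of Tate modules, which is precisely where your argument ends up. One minor caveat: your stated mechanism for perfectoidness (finite \'etale transition maps adjoining $p$-power roots, then almost purity) is a heuristic rather than the actual argument of \cite{sw}, which embeds $\mc{M}_{LT,\infty}$ into a product of universal covers $\widetilde{\mb{H}}$ that are perfectoid for crystalline reasons; but since you correctly flag that the soft limit argument is insufficient and you defer to the classification theorem in \cite{sw}, this does not affect the validity of the proof.
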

\begin{proof} This is Theorem 6.3.4 from \cite{sw}. One defines $\mc{M} _{LT,\infty}$ as a deformation functor of $p$-divisible groups with a trivialization of Tate modules.
\end{proof}

To compare $X$ and $\mc{M}_{LT, \infty}$ (hence their cohomology groups) we use the $p$-adic uniformisation of Rapoport-Zink at the infinite level. Let us denote by $D$ the quaternion algebra over $\mb{Q}$ which is ramified exactly at $p$ and $\infty$. The $p$-adic uniformisation of Rapoport-Zink states
\begin{prop}
We have an isomorphism of adic spaces
$$X_{\sss} \sim \varprojlim _{K_p}   D^{\times}(\mb{Q}) \backslash \left( \mc{M} _{LT, K_p} \times  \GL_2(\mb{A}_f ^p) \right) / K_pK^p $$
This isomorphism is equivariant with respect to the action of the Hecke algebra of level $K^p$.
\end{prop}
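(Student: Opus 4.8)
The plan is to deduce the assertion at the infinite level from the classical Rapoport-Zink $p$-adic uniformisation at each finite level $K_p$, and then to pass to the inverse limit using the presentations of $X_{\sss}$ and of $\mc{M}_{LT, \infty}$ as the $\sim$-limits of the corresponding finite-level spaces. So the proof splits naturally into a finite-level input, a compatibility check, and a limit argument.

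First I would recall the uniformisation theorem at finite level. Since $D$ is ramified exactly at $p$ and $\infty$, the group $D^{\times}(\mb{Q}_p)$ is the unit group of the quaternion division algebra over $\mb{Q}_p$, which is precisely the group of self-quasi-isogenies of the supersingular $p$-divisible group framing the Lubin-Tate deformation problem, while $D^{\times}(\mb{A}_f^p) \cong \GL_2(\mb{A}_f^p)$ because $D$ is split away from $p$. The Rapoport-Zink theorem then yields, for each $K_p$, an isomorphism of formal schemes between the completion of an integral model of $X(K_p K^p)$ along its supersingular locus and the quotient
$$D^{\times}(\mb{Q}) \backslash \left( \widehat{\mc{M}}_{LT, K_p} \times \GL_2(\mb{A}_f^p) / K^p \right),$$
with $D^{\times}(\mb{Q})$ acting diagonally, through $D^{\times}(\mb{Q}_p)$ on the first factor and through $\GL_2(\mb{A}_f^p)$ on the second. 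Passing to adic generic fibres gives, over $\Spa(C, \mc{O}_C)$, the isomorphism
$$X(K_p K^p)_{\sss} \cong D^{\times}(\mb{Q}) \backslash \left( \mc{M}_{LT, K_p} \times \GL_2(\mb{A}_f^p) \right) / K_p K^p.$$
I would then verify that these isomorphisms fit into an isomorphism of inverse systems: the transition maps forgetting level structure at $p$ on the modular-curve side correspond to the natural projections $\mc{M}_{LT, K_p'} \to \mc{M}_{LT, K_p}$ for $K_p' \subset K_p$, and all maps commute with the prime-to-$p$ Hecke action, which is exactly the equivariance asserted in the statement. Taking $\varprojlim_{K_p}$ and invoking Theorem \ref{locus-infty} for the left-hand side together with the existence of the perfectoid space $\mc{M}_{LT, \infty}$ (the preceding theorem) for the right-hand side then produces the desired $\sim$-isomorphism.

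The main obstacle I anticipate lies in the passage to the limit through the discrete quotient by $D^{\times}(\mb{Q})$: one must ensure that forming this arithmetic quotient commutes with the inverse limit over $K_p$ and is compatible with the $\sim$-relation of \cite{sw}, so that it is not merely the underlying topological spaces but also the structure presheaves (and, via tilting, their characteristic-$p$ counterparts) that match. Because $D^{\times}(\mb{Q})$ acts properly discontinuously with finite stabilisers on the product and $\mc{M}_{LT, \infty}$ is already known to be perfectoid, I expect this to follow from the functoriality of the Scholze-Weinstein limit construction; but checking this compatibility on the nose, rather than just on underlying spaces, is where the genuine work will be.
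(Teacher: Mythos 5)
Your proposal is correct and follows essentially the same route as the paper, which likewise invokes the finite-level uniformisation of \cite{rz}, adifies that construction, and passes to the limit using Theorem \ref{locus-infty}. The only difference is one of detail: your write-up makes explicit the compatibility checks (transition maps, Hecke equivariance, and the behaviour of the quotient by $D^{\times}(\mb{Q})$ under the limit) that the paper's two-sentence proof leaves implicit.
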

\begin{proof}
The uniformisation at finite level is proved in \cite{rz}. We adify their construction and pass to the limit using Theorem \ref{locus-infty}.
\end{proof}

\section{On admissible representations}

Having recalled the geometric results, we now pass to the results about representations of $\GL_2(\mb{Q}_p)$. We review and prove some facts about Banach admissible representations. Then we recall recent results of Paskunas which allow us to consider the localisation functor. 

\subsection{General facts and definitions} We start with general facts about admissible representations. In our definitions, we will follow \cite{em4}. As before, let $E$ be a finite extension of $\mb{Q}_p$ with ring of integers $\mc{O}$, a uniformiser $\varpi$ and the residue field $k$. Let $C(\mc{O})$ denote the category of complete Noetherian local $\mc{O}$-algebras having finite residue fields. Let us consider $A \in C(\mc{O})$. We let $G$ be any connected reductive group over $\mb{Q}_p$.

\begin{defi} Let $V$ be a representation of $G$ over $A$. A vector $v \in V$ is smooth if $v$ is fixed by some open subgroup of $G$ and $v$ is annihilated by some power $\mf{m} ^i$ of the maximal ideal of $A$. Let $V_{sm}$ denote the subset of smooth vectors of $V$. We say that a $G$-representation $V$ over $A$ is smooth if $V=V_{sm}$.
\medskip

\noindent A smooth $G$-representation $V$ over $A$ is admissible if $V^H[\mf{m} ^i]$ (the $\mf{m} ^i$-torsion part of the subspace of $H$-fixed vectors in $V$) is finitely generated over $A$ for every open compact subgroup $H$ of $G$ and every $i \geq 0$.
\end{defi}
\begin{defi} We say that a $G$-representation $V$ over $A$ is $\varpi$-adically continuous if $V$ is $\varpi$-adically separated and complete, $V[\varpi ^{\infty}]$ is of bounded exponent, $V/\varpi ^i V$ is a smooth $G$-representation for any $i \geq 0$.
\end{defi}
\begin{defi}
A $\varpi$-adically admissible representation of $G$ over $A$ is a $\varpi$-adically continuous representation $V$ of $G$ over $A$ such that the induced $G$-representation on $(V/\varpi V)[\mf{m}]$ is admissible smooth over $A/\mf{m}$.
\end{defi} 
This definition implies that for every $i \geq 0$, the $G$-representation $V/\varpi ^i V$ is smooth admissible. See Remark 2.4.8 in \cite{em4}.
\begin{defi} We call a $G$-representation $V$ over $E$ Banach admissible if there exists a $G$-invariant lattice $V^{\circ} \subset V$ over $\mc{O}$ such that $V^{\circ}$ is $\varpi$-adically admissible as a representation of $G$ over $\mc{O}$.
\end{defi} 
\begin{prop} The category of $\varpi$-adically admissible representations of $G$ over $A$ is abelian and moreover, a Serre subcategory of the category of $\varpi$-adically continuous representations.
\end{prop}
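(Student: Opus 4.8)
The plan is to show that the category $\mathcal{C}$ of $\varpi$-adically admissible $G$-representations over $A$ is closed under kernels, cokernels, and extensions within the ambient abelian category $\mathcal{D}$ of $\varpi$-adically continuous representations, and that these are computed in $\mathcal{D}$. The key reduction is to pass to the mod-$\varpi$ reductions and exploit that admissible smooth representations over a field form a well-behaved abelian category. Concretely, for a morphism $f: V \to W$ in $\mathcal{C}$, I would first observe that $\mathcal{D}$ is already known to be abelian (this is established in \cite{em4}, Emerton's reference being cited throughout), so the kernel, image, and cokernel of $f$ exist in $\mathcal{D}$ as $\varpi$-adically continuous representations. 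The entire content is therefore to verify that each of these constructions remains $\varpi$-adically admissible, i.e. that the induced action on $(V'/\varpi V')[\mathfrak{m}]$ is admissible smooth over $A/\mathfrak{m}$ for $V'$ equal to $\ker f$, $\Ima f$, and $\coker f$.

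\medskip

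\noindent \textbf{First} I would record the precise definitional criterion: by Definition 3.5 and the remark following it, a $\varpi$-adically continuous $V'$ is $\varpi$-adically admissible exactly when $V'/\varpi^i V'$ is smooth admissible for all $i$, and in fact it suffices to check $i=1$ together with the continuity hypotheses. The crux is a snake-lemma argument applied to multiplication by $\varpi$. Starting from a short exact sequence $0 \to K \to V \xrightarrow{f} V \to Q \to 0$ in $\mathcal{D}$ (split into two short exact sequences through the image), I would tensor down mod $\varpi$ and use the long exact sequence relating $V'/\varpi V'$ and $V'[\varpi]$ for the various terms. Admissible smooth representations over the field $A/\mathfrak{m}$ form an abelian category (a standard fact, again in \cite{em4}), so kernels, images, and cokernels of the reduced maps stay admissible smooth; the remaining work is to transfer this back up to the integral level by controlling the $\varpi$-torsion, which is bounded by the $\varpi$-adic continuity assumption (the condition that $V[\varpi^\infty]$ has bounded exponent is exactly what prevents the torsion from growing uncontrollably when one reduces).

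\medskip

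\noindent \textbf{The hard part will be} the cokernel and the bounded-torsion bookkeeping: while kernels of $\varpi$-adically continuous maps are visibly closed (a closed subrepresentation of a $\varpi$-adically separated complete module is again separated and complete, and taking $\mathfrak{m}$-torsion of $\mathfrak{m}$-fixed vectors is left exact), the cokernel $Q = \coker f$ requires checking that $Q$ is $\varpi$-adically separated and complete and that its torsion exponent stays bounded. Here I would use that the image $\Ima f$ is a closed submodule (so that $Q$ is Hausdorff in the quotient topology) and that the completeness of $V$ descends to the quotient because $\varpi$-adic completeness is preserved under passage to cokernels of maps with closed image. The claim that $\mathcal{C}$ is a \emph{Serre} subcategory then amounts to the additional statement that whenever $0 \to V' \to V \to V'' \to 0$ is exact in $\mathcal{D}$ with $V$ in $\mathcal{C}$, both $V'$ and $V''$ lie in $\mathcal{C}$; this follows from the same mod-$\varpi$ reduction, since a subobject or quotient of an admissible smooth representation over $A/\mathfrak{m}$ is admissible smooth. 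I expect the genuine obstacle to be entirely formal rather than deep — the delicacy lies in keeping the functorial diagrams exact after reduction mod $\varpi$, and this is where the boundedness of $V[\varpi^\infty]$ does the real work.
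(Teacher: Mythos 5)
Your plan has a genuine gap, and it sits exactly where you locate the ``hard part.'' You take as inputs that the ambient category $\mathcal{D}$ of $\varpi$-adically continuous representations is abelian, that kernels and cokernels of a morphism between admissible objects can be computed in $\mathcal{D}$, and in particular that $\Ima f$ is closed so that the cokernel is $\varpi$-adically separated. None of this is available. Images of continuous maps in $\mathcal{D}$ need not be closed: take $V$ to be the $\varpi$-adic completion of $\bigoplus_{n\geq 1}\mathcal{O}$ with trivial $G$-action (this is $\varpi$-adically continuous: separated, complete, torsion-free, with smooth reductions), and let $f\colon V\to V$ multiply the $n$-th coordinate by $\varpi^{n}$; then $(\varpi,\varpi^{2},\varpi^{3},\dots)$ lies in the closure of $\Ima f$ but not in $\Ima f$, so the cokernel is not $\varpi$-adically separated and leaves $\mathcal{D}$ altogether. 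So ``$\mathcal{D}$ is already known to be abelian'' is not a quotable fact. For morphisms between $\varpi$-adically \emph{admissible} representations the image is indeed closed, but that is a consequence of the very proposition being proved (it is the integral analogue of the Schneider--Teitelbaum closed-image theorem for admissible Banach representations); assuming it makes your argument circular. Your proposed mechanism --- snake lemma mod $\varpi$ plus boundedness of $V[\varpi^{\infty}]$ --- cannot recover it: bounded torsion says nothing about separatedness or completeness of a quotient by a non-closed submodule.

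The paper's proof sidesteps all of this by dualizing once and for all: by Proposition 2.4.11 of \cite{em4}, the category of $\varpi$-adically admissible representations is anti-equivalent to the category of finitely generated augmented modules over the completed group ring $A[[H]]$ of a compact open subgroup $H$, and this ring is Noetherian. Under the anti-equivalence, subobjects, quotients, kernels, cokernels and extensions of representations correspond to quotients, submodules, cokernels, kernels and extensions of finitely generated modules over a Noetherian ring, where everything is automatic; abelian-ness and the Serre property follow at once. Note also that the mod-$\varpi$ facts your reduction leans on are of the same nature: that a quotient of an admissible smooth representation in characteristic $p$ is again admissible is not formal (the obstruction is $H^{1}(H,W)$ in the sequence $0 \to W^{H} \to V^{H} \to (V/W)^{H} \to H^{1}(H,W)$, which need not vanish in characteristic $p$), and its standard proof is Pontryagin duality with the Noetherian Iwasawa algebra. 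So the ``direct'' route secretly invokes the duality at every key step; running the duality argument once, as the paper does, is the correct proof.
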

\begin{proof} The category is anti-equivalent to the category of finitely generated augmented modules over certain completed group rings. See Proposition 2.4.11 in \cite{em4}.
\end{proof}
Now, we will prove an analogue of Lemma 13.2.3 from \cite{bo} in the $l=p$ setting. We will later apply this lemma to the cohomology of the ordinary locus to force its vanishing after localisation at a supersingular representation of $\GL_2(\mb{Q}_p)$. We have proved it already in the mod $p$ setting as Lemma 3.3 in \cite{cho}.
\begin{lemm}
For any smooth admissible representation $(\pi,V)$ of the parabolic subgroup $P \subset G$ over $A$, the unipotent radical $U$ of $P$ acts trivially on $V$.
\end{lemm}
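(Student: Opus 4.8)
The plan is to prove that every vector $v\in V$ is fixed by $U$, using admissibility to trap the $t$-translates of $v$ in a finite set for a suitable element $t$ of the Levi. First I would record the only consequence of admissibility that we use: since $A$ is a complete Noetherian local $\mc{O}$-algebra with finite residue field, each $A/\mf{m}^i$ is a finite ring, so for every open compact subgroup $H\subset P$ the $A$-module $V^H[\mf{m}^i]$, being finitely generated and killed by $\mf{m}^i$, is a \emph{finite set}. Now fix $v\in V$; by smoothness it is annihilated by some $\mf{m}^i$ and fixed by some open compact subgroup of $P$. Writing $P=MU$ for a Levi decomposition, I would arrange this fixing subgroup in the form $H=H_M U_0$ with $H_M\subset M$ and $U_0\subset U$ open compact and $H_M$ normalising $U_0$: starting from an arbitrary open compact $H'$ fixing $v$, take $H_M=H'\cap M$ and replace $U_0=H'\cap U$ by the intersection $\bigcap_{h\in H_M}hU_0h^{-1}$, which is finite by compactness of $H_M$, hence still open compact and now $H_M$-stable, so that $H_M U_0$ is genuinely a subgroup of $P$ fixing $v$.

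The key input from the structure of $P$ is a $\mb{Q}_p$-rational cocharacter $\lambda\colon\mb{G}_m\to Z(M)$ defining $P$, for which conjugation contracts $U$ to the identity. Setting $t=\lambda(p)\in Z(M)$ and, after possibly replacing $\lambda$ by its inverse, assuming conjugation by $t$ expands $U_0$, one has $tU_0t^{-1}\supseteq U_0$ and $\bigcup_{n\ge 0}t^nU_0t^{-n}=U$: since conjugation by $t^{-1}$ contracts $U$, any $u\in U$ satisfies $t^{-n}ut^n\in U_0$ for $n$ large, i.e. $u\in t^nU_0t^{-n}$. The crucial observation is then that all the translates $v_n:=\pi(t^n)v$ lie in the \emph{single} finite set $V^{H}[\mf{m}^i]$. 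Indeed $v_n$ is killed by $\mf{m}^i$ and fixed by $t^nHt^{-n}$; since $t$ centralises $M$ we have $t^nH_Mt^{-n}=H_M$, while $t^nU_0t^{-n}\supseteq U_0$, so $v_n$ is fixed by $H_MU_0=H$. As $V^H[\mf{m}^i]$ is finite, the sequence $(v_n)_{n\ge 0}$ must repeat: $\pi(t^m)v=\pi(t^n)v$ for some $m<n$, and applying $\pi(t^{-m})$ gives $\pi(t^k)v=v$ with $k=n-m\ge 1$.

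It remains to combine this periodicity with $U_0$-invariance. Since $v$ is fixed by $t^k$ and by $U_0$, it is fixed by every conjugate $t^{kj}U_0t^{-kj}$: for $u=t^{kj}u_0t^{-kj}$ with $u_0\in U_0$ one computes $\pi(u)v=\pi(t^{kj})\pi(u_0)\pi(t^{-kj})v=v$, using $\pi(t^{kj})v=v$ and $\pi(u_0)v=v$. As $j$ varies these subgroups form a cofinal subsequence of the increasing family $t^nU_0t^{-n}$, so their union is again all of $U$; hence $v$ is $U$-fixed. Since $v$ was arbitrary, $U$ acts trivially on $V$. I expect the main obstacle to be precisely the middle step: one must choose the fixing subgroup $H$ compatibly with the Levi decomposition and exploit that $t$ lies in $Z(M)$, so that the entire $t$-orbit of $v$ is confined to one finite invariant set. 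This is where both the admissibility hypothesis (finiteness of $V^H[\mf{m}^i]$) and the parabolic structure of $P$ (the contracting cocharacter) are indispensable.
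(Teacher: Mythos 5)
Your proposal is correct, and its skeleton matches the paper's: both write $P$ as Levi times unipotent radical, choose a fixing subgroup with an Iwahori-type factorization ($H_M U_0$ for you, $K_P = K_L K_U$ in the paper), pick a central element of the Levi whose conjugates of $U_0$ exhaust $U$, and feed admissibility in through the finiteness of $V^H[\mf{m}^i]$. Where you genuinely diverge is the endgame. The paper never invokes periodicity: it observes that the modules $V^{z^{-n}K_P z^n}[\mf{m}^i]$ are pairwise isomorphic via $\pi(z^{n-m})$, hence all of the same finite length, so the natural inclusions among them must be equalities; this yields $V^{K_P}[\mf{m}^i] = V^{K_L U}[\mf{m}^i] \subseteq V^{U}[\mf{m}^i]$, i.e.\ every vector fixed by $K_P$ and killed by $\mf{m}^i$ is already $U$-fixed. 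You instead run a pigeonhole argument: the orbit $\pi(t^n)v$ is trapped in the single finite set $V^H[\mf{m}^i]$, forcing $\pi(t^k)v=v$ for some $k\geq 1$, after which $U_0$-invariance propagates along the conjugates $t^{kj}U_0t^{-kj}$. The trade-off is instructive: your argument needs $V^H[\mf{m}^i]$ to be a finite \emph{set}, which uses that $A/\mf{m}^i$ is a finite ring (true here because $A \in C(\mc{O})$ has finite residue field), whereas the paper's length comparison only needs finite length and would survive an infinite residue field; conversely, your route avoids any discussion of lengths and is more elementary in that sense.

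One step you should tighten: for the subgroup $U_0=\bigcap_{h\in H_M}h(H'\cap U)h^{-1}$ you constructed, the containment $tU_0t^{-1}\supseteq U_0$ is not automatic. What the cocharacter actually gives is uniform contraction on compact sets (in exponential coordinates, conjugation by $t^{-1}$ is linear with all eigenvalues of $p$-adic absolute value less than $1$), hence only $t^{-N}U_0t^{N}\subseteq U_0$ for \emph{some} $N\geq 1$; when the weights of $\lambda$ on the Lie algebra of $U$ are not all equal, a ``skew'' compact open subgroup can genuinely fail the containment for $N=1$. This is harmless for your argument --- replace $t$ by $t^N$ throughout, and the monotone chain, the pigeonhole, and the exhaustion of $U$ all go through unchanged --- and the paper glosses over the very same point when it simply asserts the existence of $z$ with $z^{-n}K_Pz^{n}\subseteq\cdots\subseteq K_P\subseteq zK_Pz^{-1}\subseteq\cdots$ and $\bigcup_{n\geq 0}z^nK_Pz^{-n}=K_LU$. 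Your phrase that the intersection defining $U_0$ ``is finite by compactness'' should also be unpacked (the normalizer of $H'\cap U$ in $H_M$ is open, hence of finite index, so there are only finitely many distinct conjugates), but that too is standard.
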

\begin{proof} Let $L$ be a Levi subgroup of $P$, so that $P=LU$. Let $v \in V$ and let $K_P = K_L K_U$ be a compact open subgroup of $P$ such that $v\in V ^{K_P}$. We choose an element $z$ in the centre of $L$ such that:
$$z^{-n}K_P z^n \subset ... \subset z^{-1}K_P z \subset K_P \subset z K_P z^{-1} \subset ... \subset z^n K_P z^{-n} \subset ...$$
and $\bigcup _{n \geq 0} z^n K_P z^{-n} = K_L U$. For every $n$ and $m$, modules $V^{z^{-n}K_P z^n}[\mf{m} ^i]$ and $V^{z^{-m}K_P z^m}[\mf{m} ^i]$ are of the same length  for every $i \geq 0$, as they are isomorphic via $\pi(z ^{n-m})$. We naturally have an inclusion $V^{z^{-n}K_P z^n}[\mf{m} ^i] \subset V^{z^{-m}K_P z^m}[\mf{m} ^i]$ and hence we get an equality $V^{z^{-n}K_P z^n}[\mf{m} ^i] = V^{z^{-m}K_P z^m}[\mf{m} ^i]$. By smoothness, there exists $i$ such that $v \in V[\mf{m} ^i]$. Thus we have $v \in V^{K_P}[\mf{m} ^i] = V^{z^{-n}K_P z^n}[\mf{m} ^i]=V^{K_L U}[\mf{m} ^i]$ which is contained in $V^U[\mf{m} ^i]$.  
\end{proof}
\begin{lemm} For any $\varpi$-adically admissible representation $(\pi,V)$ of the parabolic subgroup $P \subset G$ over $A$, the unipotent radical $U$ of $P$ acts trivially on $V$.
\end{lemm}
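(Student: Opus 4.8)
The plan is to deduce this from the previous lemma by reduction modulo powers of $\varpi$. Recall that a $\varpi$-adically admissible representation $V$ is by construction $\varpi$-adically separated and complete, and that each quotient $V/\varpi ^i V$ is a smooth admissible representation of $P$ over $A$ (this is exactly the content of Remark 2.4.8 in \cite{em4} recalled after the relevant definition). These two facts are the only inputs I will use.

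First I would fix $v \in V$ and $u \in U$ and consider the element $\pi(u)v - v$. For each $i \geq 0$ the quotient $V/\varpi ^i V$ is smooth admissible, so the previous lemma applies to it and shows that $U$ acts trivially on $V/\varpi ^i V$. Reducing $\pi(u)v - v$ modulo $\varpi ^i V$ therefore gives zero, that is, $\pi(u)v - v \in \varpi ^i V$.

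Since this holds for every $i$, the element $\pi(u)v - v$ lies in $\bigcap _{i \geq 0} \varpi ^i V$, which is zero because $V$ is $\varpi$-adically separated. Hence $\pi(u)v = v$, and as $v$ and $u$ were arbitrary we conclude that $U$ acts trivially on $V$. There is no real obstacle here: the definition of $\varpi$-adic admissibility has been arranged precisely so that the smooth admissible case passes to the $\varpi$-adic limit, and the only point one needs to verify is that the reductions $V/\varpi ^i V$ genuinely inherit the hypotheses of the previous lemma, which they do by the very definition of a $\varpi$-adically admissible representation. The separatedness of $V$ is what upgrades triviality on each finite quotient to triviality on $V$ itself.
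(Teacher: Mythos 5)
Your proof is correct and follows the same route as the paper's: reduce modulo $\varpi^i$, apply the preceding lemma to the smooth admissible quotients $V/\varpi^i V$, and conclude from the $\varpi$-adic structure of $V$. The paper phrases the last step as $V = \varprojlim_i V/\varpi^i V$, while you spell out that separatedness alone (i.e.\ $\bigcap_{i} \varpi^i V = 0$) suffices, which is a slightly sharper but essentially identical conclusion.
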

\begin{proof} By the remark above, each $V/ \varpi ^i V$ is admissible, and hence the preceding lemma applies, so that $U$ acts trivially on each $V/ \varpi ^i V$. But $V = \varprojlim _i V/ \varpi ^i V$, hence $U$ acts trivially on $V$. 
\end{proof}

Later on, we will need the following result.
\begin{lemm} Let $V = \Ind _P ^G W$ be a parabolic induction. If $V$ is a $\varpi$-adically admissible representation of $G$ over $A$, then $W$ is a $\varpi$-adically admissible representation of $P$ over $A$.
\end{lemm}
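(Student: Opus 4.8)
The plan is to reduce the statement to an assertion about smooth admissible representations over the residue field $k = A/\mf{m}$, and then to exploit the compactness of the flag variety $G/P$ through a Mackey-style decomposition of invariants. First I would record the $P$-equivariant evaluation map $\mathrm{ev}_1 \colon V = \Ind_P^G W \ra W$, $f \mapsto f(1)$, which is surjective (using the defining relation $f(pg)=\pi_W(p)f(g)$). Since the induction commutes with the reductions $(-)/\varpi^i$ and with passage to $\mf m$-torsion $(-)[\mf m]$, the $\varpi$-adic continuity of $W$ is either part of the data defining the induction or transfers from that of $V$ through $\mathrm{ev}_1$ and its reductions. In any case the substantial new content is the admissibility of the smooth $k$-representation $\bar W := (W/\varpi W)[\mf m]$ of $P$, given that $\bar V := (V/\varpi V)[\mf m] = \Ind_P^G \bar W$ is admissible smooth over $k$ as a $G$-representation.

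Next I would prove this core statement. Here it is crucial that one cannot simply restrict admissibility from $G$ to $P$: an open compact subgroup of $P$ is compact but not open in $G$, and indeed $V|_P$ need not be admissible at all (for $G = \GL_2(\mb Q_p)$ and $P=B$ the quotient $B(\mb Z_p)\backslash \mb P^1(\mb Q_p)$ is infinite). The point is rather to fix an open compact subgroup $H \subset G$ and use that $P\backslash G$ is compact (as $G/P$ is proper) while $G/H$ is discrete, so that the double coset space $P\backslash G/H$ is finite. A direct computation with the condition $f(pg)=\pi_W(p)f(g)$ and right $H$-invariance then yields the decomposition
$$\bar V^{H} \cong \bigoplus_{g \in P\backslash G/H} \bar W^{\, gHg^{-1}\cap P},$$
the summand at $g$ being the value $f(g)$, which must lie in $\bar W^{\, gHg^{-1}\cap P}$.

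Since $\bar V$ is admissible, $\bar V^{H}$ is finite-dimensional over $k$, so every summand on the right, and in particular the term $\bar W^{H\cap P}$ coming from $g=1$, is finite-dimensional. Finally, given an arbitrary open compact subgroup $H_P \subset P$, I would choose an open compact $H \subset G$ with $H\cap P \subseteq H_P$ — possible because the subspace topology on $P$ inherited from $G$ has the sets $H\cap P$ as a neighbourhood basis of the identity — whence $\bar W^{H_P} \subseteq \bar W^{H\cap P}$ is finite-dimensional. As $H_P$ was arbitrary, $\bar W$ is admissible smooth over $k$, which completes the verification that $W$ is $\varpi$-adically admissible.

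The main obstacle is precisely this failure of admissibility to restrict from $G$ to $P$; the entire force of the argument is packaged into the finiteness of $P\backslash G/H$ coming from the compactness of the flag variety, and into matching an arbitrary open compact subgroup of $P$ with an intersection $H\cap P$ of open compacts of $G$. The remaining, more formal, point to watch is that the chosen induction functor commutes with the reductions $(-)/\varpi^i$ and $(-)[\mf m]$, which is exactly what legitimises the passage to the smooth $k$-linear statement.
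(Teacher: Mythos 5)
The paper's own ``proof'' of this lemma is a single citation to Theorem 4.4.6 of Emerton's \emph{Ordinary parts} paper, so your self-contained argument is necessarily a different route. Its core is correct and is the right engine for this kind of statement: compactness of $P\backslash G$ plus openness of $H$ gives finiteness of $P\backslash G/H$; the Mackey-type identification $\bar{V}^H \simeq \bigoplus_{g\in P\backslash G/H} \bar{W}^{\,gHg^{-1}\cap P}$ (with the $g$-component pinned inside $\bar{W}^{\,gHg^{-1}\cap P}$ exactly as you say) converts admissibility of $\bar{V}$ into finite-dimensionality of $\bar{W}^{H\cap P}$; and the final step of choosing, for a given open compact $H_P\subset P$, an open compact $H\subset G$ with $H\cap P\subseteq H_P$ is the correct way to handle the genuine problem you identify, namely that admissibility does not restrict from $G$ to $P$. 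This is what the cited theorem buys the paper in one line; your argument makes the mechanism visible.

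However, the two steps you set aside as formal are precisely where care is needed, and one of them, as phrased, would fail. (i) Commutation of induction with $(-)[\mf{m}]$ is indeed formal (the $A$-action on functions is pointwise, and induction commutes with kernels), but commutation with $(-)/\varpi^i$ is not: induction is a right adjoint and does not commute with quotients. The isomorphism $(\Ind_P^G W)/\varpi^i \simeq \Ind_P^G(W/\varpi^i W)$ does hold for the continuous induction of a $\varpi$-adically continuous $W$, but proving it requires the discreteness of $W/\varpi^i W$, local sections of $G\to P\backslash G$ over a finite compact open partition, continuity of orbit maps, and bounded torsion. Note that the naive coset-by-coset approach breaks down: $W^{P'}\to (W/\varpi W)^{P'}$ need not be surjective (the obstruction lies in $H^1(P',W[\varpi])$), so one must lift functions, not invariants. (ii) More seriously, your fallback claim that the $\varpi$-adic continuity of $W$ (separatedness, completeness, bounded torsion, smoothness of the $W/\varpi^i W$) ``transfers from that of $V$ through $\mathrm{ev}_1$'' is not an argument: a surjection does not transmit $\varpi$-adic separatedness or completeness to its target, and there is no $P$-equivariant section available. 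The correct reading is your other branch: $\varpi$-adic continuity of $W$ must be taken as part of the data, since it is what makes the continuous induction $\Ind_P^G W$ meaningful at all, and it is how Emerton's Theorem 4.4.6 is set up. With that hypothesis made explicit and the commutation lemma actually proved, your argument is complete; in the paper's application (representations killed by $\varpi^s$, smooth induction) both issues are harmless.
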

\begin{proof} This follows from Theorem 4.4.6 in \cite{em4}.
\end{proof}

\subsection{Localisation functor}

Let $\pi$ be a supersingular representation of $\GL_2(\mb{Q}_p)$ over $k$. Recall that supersingular representations correspond to irreducible two-dimensional Galois representations under the local Langlands correspondence modulo p. See \cite{be}.

In \cite{pa1}, Paskunas has proved the following result (Proposition 5.32)
\begin{prop} We have a decomposition:
$$\Rep ^{adm} _{\GL_2(\mb{Q} _p),\xi}(\mc{O}/\varpi ^s \mc{O}) = \Rep ^{adm} _{\GL_2(\mb{Q} _p),\xi}(\mc{O}/\varpi ^s \mc{O}) _{(\pi)} \oplus \Rep ^{adm} _{\GL_2(\mb{Q} _p),\xi}(\mc{O}/\varpi ^s \mc{O}) ^{(\pi)}$$
where $\Rep ^{adm} _{\GL_2(\mb{Q} _p),\xi}(\mc{O}/\varpi ^s \mc{O})$ is the (abelian) category of smooth admissible $\mc{O}/\varpi ^s \mc{O}$-representations admitting a central character $\xi$, $\Rep ^{adm} _{\GL_2(\mb{Q} _p),\xi}(\mc{O}/\varpi ^s \mc{O}) _{(\pi)}$ (resp. $\Rep ^{adm} _{\GL_2(\mb{Q} _p),\xi}(\mc{O}/\varpi ^s \mc{O}) ^{(\pi)}$) is the subcategory of it consisting of representations $\Pi$ such that all irreducible subquotients of $\Pi$ are (resp. are not) isomorphic to $\pi$. 
\end{prop}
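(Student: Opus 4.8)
The plan is to recognise the asserted decomposition as a block decomposition and to reduce it to an orthogonality statement about extensions of irreducibles. Write $\mathcal{A} = \Rep^{adm}_{\GL_2(\mb{Q}_p),\xi}(\mc{O}/\varpi^s\mc{O})$, and observe that the two proposed summands are exactly the full subcategories of objects all of whose irreducible subquotients are isomorphic to $\pi$, respectively avoid $\pi$. To obtain the direct sum it suffices to prove that $\pi$ constitutes a block by itself, i.e. that it has no nontrivial extensions with any non-isomorphic irreducible object of $\mathcal{A}$. First I would record that $\mathcal{A}$ is locally finite: over the Artinian local ring $\mc{O}/\varpi^s\mc{O}$ an admissible smooth representation with central character is the union of its finitely generated subrepresentations, and for $\GL_2(\mb{Q}_p)$ a finitely generated admissible representation has finite length. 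Thus it is enough to construct the decomposition on finite-length objects and then pass to the filtered colimit, functoriality guaranteeing compatibility.

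Next I would reduce the coefficients from $\mc{O}/\varpi^s\mc{O}$ to the residue field $k$. Any irreducible object of $\mathcal{A}$ is killed by $\varpi$, hence is already a $k$-representation; and for two non-isomorphic irreducibles $\pi,\sigma$ a short computation shows that $\varpi$ annihilates every extension, so that $\Ext^1_{\mc{O}/\varpi^s}(\pi,\sigma) = \Ext^1_k(\pi,\sigma)$. The linking relation among irreducibles, and hence the block structure, is therefore governed entirely by extension groups over $k$.

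The heart of the argument is the resulting Ext-vanishing: for every irreducible smooth $k$-representation $\sigma$ with central character $\xi$ and $\sigma \not\cong \pi$ one should have $\Ext^1_{\GL_2(\mb{Q}_p)}(\pi,\sigma) = 0 = \Ext^1_{\GL_2(\mb{Q}_p)}(\sigma,\pi)$. This is precisely the statement that the supersingular representation $\pi$ is isolated in the spectrum of irreducibles: it admits no nontrivial extension with a character, a (twist of) Steinberg, a principal series, or a non-isomorphic supersingular representation. Establishing this rests on the classification of irreducible smooth mod $p$ representations of $\GL_2(\mb{Q}_p)$ together with the explicit determination of their extension groups, due to Colmez, Emerton and Breuil--Paskunas. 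Self-extensions $\Ext^1(\pi,\pi)$ may well be nonzero; this does no harm, as the block $\{\pi\}$ remains a singleton.

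Granting this, the decomposition follows from the standard block formalism. By d\'evissage on length, the vanishing of $\Ext^1$ between non-isomorphic irreducibles in distinct blocks propagates to the vanishing of $\Ext^1(\Pi',\Pi'')$ whenever $\Pi'$ has all subquotients $\cong\pi$ and $\Pi''$ has none (and in the reverse direction); together with $\Hom(\Pi',\Pi'')=0=\Hom(\Pi'',\Pi')$ from Schur's lemma, an induction on length splits any finite-length $\Pi$ uniquely as $\Pi_{(\pi)}\oplus\Pi^{(\pi)}$, where $\Pi_{(\pi)}$ is the maximal subobject all of whose subquotients are $\cong\pi$. Passing to the colimit then gives the decomposition of $\mathcal{A}$. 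The main obstacle is clearly the Ext-vanishing of the third step: the rest is formal bookkeeping, but the orthogonality of $\pi$ from the remaining irreducibles is a genuine input requiring the full classification of mod $p$ representations and the computation of their extensions, and it is exactly here that the supersingularity of $\pi$ is used in an essential way.
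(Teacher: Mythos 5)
Your proposal is correct, but it is worth pointing out that the paper itself contains no argument for this statement: the proposition is presented as a result of Paskunas, and the ``proof'' is a bare citation of Proposition 5.32 of \cite{pa1}. What you have done is reconstruct, in compressed form, the proof that lives in that cited source, and your reconstruction follows the same strategy. Your formal steps are sound: the reduction to finite-length objects via local finiteness, the passage from $\mc{O}/\varpi^s\mc{O}$ to $k$ (the observation that $\varpi$ kills any extension between non-isomorphic irreducibles is a correct little Schur-type computation), and the d\'evissage producing $\Pi = \Pi_{(\pi)}\oplus\Pi^{(\pi)}$ with uniqueness, hence compatibility with filtered colimits. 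You also correctly isolate the two genuine (non-formal) inputs, and it is worth stressing that both are themselves substantial theorems of Paskunas rather than routine verifications: (i) every finitely generated smooth admissible representation of $\GL_2(\mb{Q}_p)$ over $\mc{O}/\varpi^s\mc{O}$ with central character has finite length --- this relies on the Barthel--Livn\'e/Breuil classification and is exactly what makes the admissible category locally finite; and (ii) a supersingular $\pi$ admits no nontrivial extension, in either direction and with fixed central character, with any non-isomorphic irreducible --- this is Paskunas's computation of extensions of supersingular representations (Ast\'erisque 331), and it is precisely the statement that $\{\pi\}$ is a singleton block. So your route buys an actual explanation of where supersingularity enters, at the cost of importing those two theorems as black boxes; the paper's route simply defers the entire statement to \cite{pa1}. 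Neither is more rigorous than the other in substance, but your write-up makes the logical dependencies explicit, which the paper does not.
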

We denote the projection
$$\Rep ^{adm} _{\GL_2(\mb{Q} _p),\xi}(\mc{O}/\varpi ^s \mc{O}) \mapsto \Rep ^{adm} _{\GL_2(\mb{Q} _p),\xi}(\mc{O}/\varpi ^s \mc{O}) _{(\pi)}$$
by
$$V \mapsto V_{(\pi)}$$
and we refer to it as the localisation functor with respect to $\pi$. The existence of a central character follows from the work \cite{ds} for irreducible representations. In what follows, we will ignore the central character $\xi$ in our notations, though whenever we localise we mean that we take firstly the part of a representation on which the center acts via $\xi$ and then we project as above.

\section{p-adic Langlands correspondence and analytic cohomology}

In this section we show that the $p$-adic local Langlands correspondence for $\GL_2(\mb{Q}_p)$ appears in the \'etale cohomology of the Lubin-Tate tower at infinity. We also state a conjecture about the analytic cohomology of the Lubin-Tate perfectoid. 

\subsection{p-adic Langlands correspondence}

For this section we refer the reader to \cite{be} (for the Colmez functor) and \cite{pa1} (for equivalence of categories). We recall that Colmez has constructed a covariant exact functor $\mb{V}$
$$\mb{V} : \Rep _{\mc{O}} (\GL_2(\mb{Q}_p)) \rightarrow \Rep _{\mc{O}}(G_{\mb{Q}_p})$$
which sends $\mc{O}$-representations of $\GL_2(\mb{Q}_p)$ to $\mc{O}$-representations of $G _{\mb{Q}_p} = \Gal (\bar{\mb{Q}}_p / \mb{Q}_p)$. Moreover this functor is compatible with deformations and induces an equivalence of categories when restricted to appropiate sub-representations. We call the inverse of this functor the $p$-adic local Langlands correspondence and we denote it by $B(\cdot)$. For our applications we will only need the fact that for $p$-adic continuous representations $\rho : G _{\mb{Q}_p} \ra \GL_2(E)$, $B(\rho)$ is a Banach admissible $E$-representation. Furthermore, when $\rho$ is irreducible, then $B(\rho)$ is topologically irreducible.

\medskip

Let $\bar{\rho} : G _{\mb{Q}_p} \ra \GL_2(k)$ be the reduction of $\rho$ which we assume to be irreductible. Let $\pi$ be the supersingular representation of $\GL_2(\mb{Q}_p)$ over $k$ which corresponds to $\bar{\rho}$ by the mod $p$ local Langlands correspondence, that is $\mb{V} (\pi) = \bar{\rho}$. Then one knows that $B(\rho)$ is an object of the category $\Rep ^{adm} _{\GL_2(\mb{Q} _p)}(E) _{(\pi)}$ defined above.  

\subsection{\'Etale cohomology} We recall the results of Emerton on the $p$-adic completed cohomology and then we prove that certain p-adic Banach representations appear in the \'etale cohomology of $\mc{M} _{LT, \infty}$. From now on we work in the global setting. Let $\rho : G_{\mb{Q}} = \Gal (\bar{\mb{Q}}/ \mb{Q}) \ra \GL_2(E)$ be a continuous Galois representation. We assume that it is unramified outside some finite set $\Sigma = \Sigma _0 \cup \{p\}$. Moreover we assume that its reduction $\bar{\rho}$ is modular (that is, isomorphic to the reduction of a Galois representation associated to some automorphic representation on $\GL_2(\mb{Q})$) and $\bar{\rho} _p = \bar{\rho} _{| G_{\mb{Q}_p}}$ is absolutely irreducible.  

\medskip

Let us recall that we have introduced spaces $X,Y$ depending on the tame level $K^p$. We now assume that $K^p$ is unramified outside $\Sigma$. We shall factor $K^p$ as $K^p=K_{\Sigma _0} K^{\Sigma _0}$. Let $\mb{T} _{\Sigma} = \mc{O}[T_l, S_l] _{l \notin \Sigma}$ be the commutative $\mc{O}$-algebra with $T_l, S_l$ formal variables indexed by $l \notin \Sigma$. This is a standard Hecke algebra which acts on modular curves by correspondences. 

\medskip

%Namely, $T_l$ acts on $Y_{\Sigma}$ and $X_{\Sigma}$ (hence also on their cohomology) via double cosets 
%$$\GL _2 (\mb{Z}_l) \begin{pmatrix} 1 & 0 \ \\ \ 0 & l  \end{pmatrix} \GL_2(\mb{Z}_l)$$
%and $S_l$ via
%$$\GL _2 (\mb{Z}_l) \begin{pmatrix} l & 0 \ \\ \ 0 & l  \end{pmatrix} \GL_2(\mb{Z}_l)$$

To the modular Galois representation $\bar{\rho} : G_{\mb{Q}} \ra \GL_2(E)$ we can associate the maximal Hecke ideal $\mf{m}$ of $\mb{T} _{\Sigma}$ which is generated by $\varpi$ (uniformiser of $\mc{O}$) and elements $T_l + a_l$ and $lS_l - b_l$, where $l$ is a place of $\mb{Q}$ which does not belong to $\Sigma$, $X^2 + \bar{a} _l X^1 + \bar{b}_l$ is the characteristic polynomial of $\bar{\rho}(\Frob _l)$ and $a _l, b_l$ are any lifts of $\bar{a} _l, \bar{b} _l$ to $\mc{O}$.

\medskip

We let $\pi _{\Sigma _0}(\rho) = \otimes _{l \in \Sigma _0} \pi _l (\rho _l)$ be the tensor product of $E$-representations of $\GL_2(\mb{Q}_l)$ ($l\in \Sigma _0$) associated to $\rho _l = \rho _{|G_{\mb{Q}_l}}$ by the generic version of the $l$-adic local Langlands correspondence (see \cite{eh}). 

\medskip

We assume that $\rho$ is pro-modular in the sense of Emerton (see \cite{em2}). Let $\mf{p}$ be the prime ideal of $\mb{T} _{\Sigma}$ associated to $\rho$ (similarly as we have associated $\mf{m}$ to $\bar{\rho}$). We have an obvious inclusion $\mf{p} \subset \mf{m}$. We remark that pro-modularity is a weaker condition than modularity and it can be seen as saying that $\rho$ is a Galois representation associated to some $p$-adic Hecke eigensystem coming from the completed Hecke algebra (the projective limit over finite level Hecke algebras). Recall that we have assumed that $\bar{\rho} _p = \bar{\rho} _{| G_{\mb{Q}_p}}$ is absolutely irreducible. This permits us to state the main result of \cite{em2} as
\begin{theo} Let $\rho : G_{\mb{Q}} \ra \GL_2(E)$ be a continuous Galois representation which is pro-modular and such that $\bar{\rho}_p$ is absolutely irreducible. Then we have a $G_{\mb{Q}} \times \GL_2(\mb{Q}_p) \times \prod _{l\in \Sigma _0} \GL_2(\mb{Q}_l)$-equivariant isomorphism of Banach admissible $E$-representations.
$$H^1(Y, E)[\mf{p}] \simeq \rho \otimes _E B(\rho _p) \otimes _E \pi _{\Sigma _0}(\rho) ^{K_{\Sigma _0}}$$
\end{theo}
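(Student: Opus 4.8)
The plan is to read off the three commuting actions on the completed cohomology one tensor factor at a time. The statement is a repackaging of Emerton's local-global compatibility, so the only genuinely hard input is the identification of the $\GL_2(\mb{Q}_p)$-factor with $B(\rho_p)$; the away-from-$p$ factors should come out by more classical arguments.

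First I would set up $H^1(Y,E)$ as a $\varpi$-adically admissible representation of $\GL_2(\mb{Q}_p)$ carrying commuting actions of $G_{\mb{Q}}$, of $\prod_{l\in\Sigma_0}\GL_2(\mb{Q}_l)$, and of the spherical Hecke algebra $\mb{T}_\Sigma$ away from $\Sigma$. Since $\bar{\rho}_p$ is absolutely irreducible, so is $\bar{\rho}$, and the maximal ideal $\mf{m}$ attached to $\bar{\rho}$ is non-Eisenstein; this forces the $H^0$ and boundary contributions to vanish after localisation at $\mf{m}$, so that $H^1(Y,E)_{\mf{m}}$ is $\varpi$-torsion free and is the only completed cohomology group that survives. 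I can then treat it as a faithful module over the completed Hecke algebra $\mb{T}_{\mf{m}}$. Passing to the $\mf{p}$-torsion $H^1(Y,E)[\mf{p}]$ fixes all the Hecke eigenvalues $T_l,S_l$ for $l\notin\Sigma$ to those of $\rho$, and hence pins down the away-from-$\Sigma$ behaviour.

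Next I would peel off the factors away from $p$. The Galois representation attached to the eigensystem $\mf{p}$ is two-dimensional, and its appearance with the expected multiplicity in the $H^1$ of a curve produces the tensor factor $\rho$, on which only $G_{\mb{Q}}$ acts nontrivially. At the auxiliary ramified places $l\in\Sigma_0$ the generic $l$-adic local Langlands correspondence in families of \cite{eh}, applied to the local components $\rho_l$, identifies the $\prod_{l}\GL_2(\mb{Q}_l)$-action on $K^{\Sigma_0}$-invariants with $\pi_{\Sigma_0}(\rho)^{K_{\Sigma_0}}$. After stripping these off I am left with a single Banach admissible $\GL_2(\mb{Q}_p)$-representation $M$, so that the problem reduces to showing $M\simeq B(\rho_p)$.

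The hard part is precisely this last identification, and here I would use Colmez's functor $\mb{V}$ together with Paskunas' block theory. Because $\bar{\rho}_p$ is absolutely irreducible, the corresponding $\pi$ is supersingular and the block $\Rep^{adm}_{\GL_2(\mb{Q}_p)}(E)_{(\pi)}$ is, via the deformation-compatible functor $\mb{V}$, equivalent to a category of modules over the local Galois deformation ring of $\bar{\rho}_p$, this block being of multiplicity one. Applying $\mb{V}$ to $M$ should recover $\rho_p$; that is, one needs local-global compatibility for the Montreal functor, namely that $\mb{V}$ applied to the localised completed cohomology reads off the restriction to $G_{\mb{Q}_p}$ of the global Galois representation. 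Multiplicity one then forces $M$ to be the unique Banach lift, namely $M\simeq\mb{V}^{-1}(\rho_p)=B(\rho_p)$, and the $G_{\mb{Q}}$- and $\GL_2(\mb{Q}_l)$-equivariance of the resulting isomorphism follows from functoriality of all the constructions involved. The entire weight of the theorem therefore rests on this compatibility of completed cohomology with the $p$-adic local Langlands correspondence, which is the content of \cite{em2}; a self-contained proof would have to reprove it, for instance by Taylor-Wiles-Kisin patching of the completed cohomology, and this is the step I expect to be the main obstacle.
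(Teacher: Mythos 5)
Your proposal is essentially the paper's approach: the paper gives no independent proof of this theorem, stating it outright as the main result of \cite{em2}, and you correctly conclude that the entire weight rests on Emerton's local-global compatibility, which you would have to import (or reprove) just as the paper does. Your intermediate sketch --- non-Eisenstein localisation of $H^1(Y,E)$ at $\mf{m}$, the Emerton--Helm correspondence at the places in $\Sigma _0$, and Colmez's functor together with Paskunas' block theory to pin down the factor $B(\rho _p)$ --- is a fair outline of how \cite{em2} itself proceeds, so nothing beyond the citation is needed here.
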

We recall that the cohomology group on the left is the $p$-adic completed cohomology of Emerton
$$H^1(Y, E) = \left(  \varprojlim _s \varinjlim _{K_p} H^1 _{et}(Y(K^pK_p), \mc{O} / \varpi ^s \mc{O}) \right) \otimes _{\mc{O}} E$$
where $K_p$ runs over compact open subgroups of $\GL_2(\mb{Q} _p)$.
\medskip

Let us remark that the Galois action of $G_{\mb{Q}_p}$ arises on $Y$, $X$, $\mc{M}_ {LT, \infty}$ (which we treat as adic spaces over $\Spa (C, \mc{O} _C)$) from the Galois action on the corresponding model over $\bar{\mb{Q}}_p$. 

\medskip

We also have a similar theorem for the compactification
\begin{theo} With assumptions as in the theorem above, we have an isomorphism of Banach admissible $K$-representations
$$H^1(X, E) _{\mf{m}} \simeq H^1(Y, E) _{\mf{m}}$$
In particular,
$$H^1(X, E)[\mf{p}] \simeq \rho \otimes _E B(\rho _p) \otimes _E \pi _{\Sigma _0}(\rho) ^{K_{\Sigma _0}}$$
\end{theo}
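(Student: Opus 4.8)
The plan is to deduce the statement from the preceding theorem by showing that $X$ and $Y$ have the same completed cohomology after localisation at $\mf{m}$; the difference between the two spaces is concentrated at the cusps, and the contribution of the cusps is Eisenstein, hence annihilated by localisation at the non-Eisenstein ideal $\mf{m}$. First I would work at a fixed finite level $K=K_pK^p$ with coefficients $F=\mc{O}/\varpi^s\mc{O}$, writing $\partial(K)=X(K)\setminus Y(K)$ for the (finite, regular, codimension-one) cuspidal boundary and considering the open-closed decomposition $Y(K)\hookrightarrow X(K)\hookleftarrow\partial(K)$. The local cohomology long exact sequence, combined with the purity isomorphism $H^i_{\partial(K)}(X(K),F)\cong H^{i-2}(\partial(K),F(-1))$ and with Artin vanishing $H^2(Y(K),F)=0$ for the affine curve $Y(K)$, yields the Hecke-equivariant exact sequence
$$0\ra H^1(X(K),F)\ra H^1(Y(K),F)\ra H^0(\partial(K),F(-1))\ra H^2(X(K),F)\ra 0.$$
Thus $H^1(X(K),F)$ is the kernel of the map into the boundary term, and it suffices to control $H^0(\partial(K),F(-1))$.

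The crux, and the main obstacle of the argument, is the vanishing $H^0(\partial(K),F(-1))_{\mf{m}}=0$; everything else is formal. This is the classical fact that the boundary cohomology of the modular curve is Eisenstein: the prime-to-$p$ Hecke operators $T_l,S_l$ act on the cusps through the reducible eigensystem attached to $1\oplus\chi_{\mathrm{cyc}}$, so every residual system of eigenvalues occurring in $H^0(\partial(K),F(-1))$ corresponds to a reducible two-dimensional representation. Since by hypothesis $\bar{\rho}_p$, and a fortiori $\bar{\rho}$, is absolutely irreducible, the ideal $\mf{m}$ is non-Eisenstein and does not lie in the support of the boundary term. Localising the displayed sequence at $\mf{m}$, an exact operation, therefore gives an isomorphism $H^1(X(K),F)_{\mf{m}}\cong H^1(Y(K),F)_{\mf{m}}$.

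It remains to pass to the limit. At finite level the localisation at the maximal ideal $\mf{m}$ of the Hecke algebra acting on the finitely generated $\mc{O}/\varpi^s\mc{O}$-module $H^1$ is cut out by an idempotent, so it commutes with the transition maps, with the colimit $\varinjlim_{K_p}$, with the limit $\varprojlim_s$, and with $-\otimes_{\mc{O}}E$. Applying these operations to the isomorphisms above and using $X\sim\varprojlim_{K_p}X(K_pK^p)$, $Y\sim\varprojlim_{K_p}Y(K_pK^p)$ yields $H^1(X,E)_{\mf{m}}\cong H^1(Y,E)_{\mf{m}}$, which is the first assertion.

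For the second assertion, note that $\mf{p}\subseteq\mf{m}$ and that, since $\mf{p}$ is the kernel of the eigenvalue homomorphism $\mb{T}_{\Sigma}\ra\mc{O}$ attached to $\rho$, the quotient $\mb{T}_{\Sigma}/\mf{p}\cong\mc{O}$ is local with maximal ideal $\mf{m}/\mf{p}$; equivalently $\mf{m}$ is the unique maximal ideal above $\mf{p}$, as $\bar{\rho}$ is the only residual eigensystem over $\mf{p}$. Hence every $t\notin\mf{m}$ acts invertibly on any $\mf{p}$-torsion module, so $M[\mf{p}]=(M_{\mf{m}})[\mf{p}]$ for every $\mb{T}_{\Sigma}$-module $M$. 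Applying this to $M=H^1(X,E)$ and $M=H^1(Y,E)$ and combining with the first assertion gives $H^1(X,E)[\mf{p}]\cong H^1(Y,E)[\mf{p}]$, and the preceding theorem identifies the right-hand side with $\rho\otimes_E B(\rho_p)\otimes_E\pi_{\Sigma_0}(\rho)^{K_{\Sigma_0}}$, as required.
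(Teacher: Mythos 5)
Your proposal is correct and takes essentially the same route as the paper: the paper's proof simply observes that absolute irreducibility of $\bar{\rho}$ makes $\mf{m}$ non-Eisenstein and then cites the proof of Proposition 7.7.13 of \cite{em3}, which is exactly the boundary-excision argument you spell out (cuspidal boundary cohomology is Eisenstein, hence killed by localisation at $\mf{m}$, plus passage to the limit). Your additional details --- the purity/Artin-vanishing derivation of the four-term sequence, the idempotent argument for compatibility of localisation with limits, and the deduction of the $[\mf{p}]$-statement from $\mf{m}$ being the unique maximal ideal above $\mf{p}$ --- are precisely what the paper leaves implicit in that citation.
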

\begin{proof} We have assumed that $\bar{\rho}_p$ is absolutely irreducible and hence $\bar{\rho}$ is absolutely irreducible which implies that $\mf{m}$ is a non-Eisenstein ideal. Now the theorem follows as in the proof of Proposition 7.7.13 of \cite{em3}.
\end{proof}
We now come back to the exact sequence which we have obtained earlier
$$... \ra H^0 (X_{\sss}, \mc{O}/\varpi ^s \mc{O}) \ra H^1 _{X_{\ord}} (X, \mc{O}/\varpi ^s \mc{O}) \ra H^1 (X, \mc{O}/\varpi ^s \mc{O}) \ra H^1 (X_{\sss}, \mc{O}/\varpi ^s \mc{O}) \ra  ...$$
By Theorem 2.1.5 of \cite{em1}, we get that $H^1(X,\mc{O}/\varpi ^s \mc{O})$ is a smooth admissible $\mc{O}/\varpi ^s \mc{O}$-representation of $\GL_2(\mb{Q}_p)$. Moreover, also $H^0(X_{\sss},\mc{O}/\varpi ^s \mc{O})$ is a smooth admissible $\mc{O}/\varpi ^s \mc{O}$-representation of $\GL_2(\mb{Q}_p)$ as $X_{\sss}(K_pK^p)$ has only finite number of connected components for each $K_p$ and $K^p$. The category of smooth admissible $\mc{O}/\varpi ^s \mc{O}$-representations is a Serre subcategory of the category of smooth (not necessarily admissible) $\mc{O}/\varpi ^s \mc{O}$-representations (see Proposition 2.4.11 of \cite{em4}). Hence, as $H^1 _{X_{\ord}} (X, \mc{O}/\varpi ^s \mc{O})$ is smooth, we infer that it is also smooth admissible. By Proposition 2.4 we get that $H^1 _{X_{\ord}} (X, \mc{O}/\varpi ^s \mc{O})$ is induced from some representation $W(\mc{O}/\varpi ^s \mc{O})$ of the Borel $B(\mb{Q}_p)$. We deduce from Lemma 3.8 (Theorem 4.4.6 in \cite{em4}) that $W(\mc{O}/\varpi ^s \mc{O})$ is smooth admissible $\mc{O}/\varpi ^s \mc{O}$-representation of $B(\mb{Q}_p)$. Thus, we can apply to it Lemma 3.7. If $\pi$ is any supersingular $k$-representation of $\GL_2(\mb{Q}_p)$, it implies that
$$H^1 _{X_{\ord}} (X, \mc{O}/\varpi ^s \mc{O}) _{(\pi)} = 0$$
because by Proposition 2.3 the representation $H^1 _{X_{\ord}} (X, \mc{O}/\varpi ^s \mc{O})$ is a smooth induction of an admissible representation, hence no supersingular representation appears as a subquotient of it.

%the only representations which appear in $H^1 _{X_{\ord}} (X, \mc{O}/\varpi ^s \mc{O})$ are of the form $\Ind _{B(\mb{Q}_p)} ^{\GL_2(\mb{Q}_p)} (\chi _1 \otimes \chi _2)$, where $\chi _1, \chi_2$ are some $\mc{O}/\varpi \mc{O}$-characters of $\mb{Q}_p$.

\medskip

Localising the exact sequence above at some supersingular $k$-representation $\pi$ we get an injection
$$H^1(X,\mc{O}/\varpi ^s \mc{O}) _{(\pi)} \hookrightarrow H^1 (X_{\sss}, \mc{O}/\varpi ^s \mc{O})$$
By passing to the limit with $s$ we get an injection
$$H^1(X,E) _{(\pi)} \hookrightarrow H^1 (X_{\sss}, E)$$
We can now prove our main theorem
\begin{theo} Let $\rho : G_{\mb{Q}}= \Gal (\bar{\mb{Q}} / \mb{Q}) \ra \GL_2(E)$ be a pro-modular representation. Assume that $\bar{\rho} _p = \bar{\rho} _{| G_{\mb{Q}_p}}$ is absolutely irreducible. Then we have a $\GL_2(\mb{Q}_p) \times G_{\mb{Q}_p}$-equivariant injection
$$B(\rho _p) \otimes _E \rho _p \hookrightarrow H^1(\mc{M} _{LT, \infty}, E)$$
\end{theo}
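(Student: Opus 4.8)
The plan is to run the already-established injection $H^1(X,E)_{(\pi)} \hookrightarrow H^1(X_{\sss}, E)$ between two explicit descriptions: Emerton's local-global compatibility (Theorem 4.2) identifies the source, and the $p$-adic uniformisation of Proposition 2.7 relates the target to $\mc{M}_{LT,\infty}$. Throughout, $\pi$ denotes the supersingular representation of $\GL_2(\mb{Q}_p)$ attached to $\bar{\rho}_p$ by the mod $p$ local Langlands correspondence, so that $\mb{V}(\pi) = \bar{\rho}_p$ and $B(\rho_p)$ lies in $\Rep^{adm}_{\GL_2(\mb{Q}_p)}(E)_{(\pi)}$.

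First I would localise Theorem 4.2 at $\pi$ and pass to the $\mf{p}$-eigenspace. Since the Hecke operators away from $p$ commute with the $\GL_2(\mb{Q}_p)$-action, the localisation functor $(\pi)$ commutes with passage to $[\mf{p}]$; and since $B(\rho_p)$ lies entirely in the $(\pi)$-block while the factors $\rho$ and $\pi_{\Sigma_0}(\rho)^{K_{\Sigma_0}}$ carry no $\GL_2(\mb{Q}_p)$-action, localisation is transparent on the right-hand side of Theorem 4.2. Restricting the Galois action to $G_{\mb{Q}_p}$ this yields a $\GL_2(\mb{Q}_p) \times G_{\mb{Q}_p}$-equivariant identification $H^1(X,E)_{(\pi)}[\mf{p}] \simeq \rho_p \otimes_E B(\rho_p) \otimes_E \pi_{\Sigma_0}(\rho)^{K_{\Sigma_0}}$, where the last factor is now a finite-dimensional multiplicity space. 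Composing with the injection quoted above (whose image lands in the $(\pi)$-part, localisation being a direct factor) gives a $\GL_2(\mb{Q}_p)\times G_{\mb{Q}_p}$-equivariant embedding
$$\rho_p \otimes_E B(\rho_p) \otimes_E \pi_{\Sigma_0}(\rho)^{K_{\Sigma_0}} \hookrightarrow H^1(X_{\sss}, E)_{(\pi)}[\mf{p}].$$

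Next I would descend to $\mc{M}_{LT,\infty}$. Rewriting Proposition 2.7 at the infinite level as $X_{\sss} \sim D^{\times}(\mb{Q}) \backslash (\mc{M}_{LT,\infty} \times \GL_2(\mb{A}_f^p)/K^p)$, the covering $\mc{M}_{LT,\infty} \times \GL_2(\mb{A}_f^p)/K^p \ra X_{\sss}$ induces a $D^{\times}(\mb{Q})$-equivariant pullback
$$H^1(X_{\sss}, E) \ra \prod_{\GL_2(\mb{A}_f^p)/K^p} H^1(\mc{M}_{LT,\infty}, E),$$
compatible with the $\GL_2(\mb{Q}_p) \times G_{\mb{Q}_p}$-actions and the prime-to-$p$ Hecke action. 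Its kernel is controlled by the Hochschild--Serre spectral sequence for the discrete group $D^{\times}(\mb{Q})$ and is a subquotient of $H^1(D^{\times}(\mb{Q}), H^0(\mc{M}_{LT,\infty}\times \GL_2(\mb{A}_f^p)/K^p, E))$. The crucial point is that $H^0(\mc{M}_{LT,\infty}, E)$ consists of locally constant functions on the set of connected components, on which $\GL_2(\mb{Q}_p)$ acts through an abelian quotient, so it contains no supersingular representation and its $(\pi)$-localisation vanishes. Hence, after localising at $\pi$, the pullback becomes injective, and I obtain $\rho_p \otimes_E B(\rho_p) \otimes_E \pi_{\Sigma_0}(\rho)^{K_{\Sigma_0}} \hookrightarrow \prod_{\GL_2(\mb{A}_f^p)/K^p} H^1(\mc{M}_{LT,\infty}, E)$.

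Finally I would extract a single copy. The representation $\rho_p \otimes_E B(\rho_p)$ is topologically irreducible as a $\GL_2(\mb{Q}_p) \times G_{\mb{Q}_p}$-representation, being the external tensor product of the irreducible finite-dimensional $\rho_p$ with the topologically irreducible Banach representation $B(\rho_p)$. Choosing a nonzero vector in the nonzero multiplicity space $\pi_{\Sigma_0}(\rho)^{K_{\Sigma_0}}$ restricts the embedding to a nonzero injection $\rho_p \otimes_E B(\rho_p) \hookrightarrow \prod_i H^1(\mc{M}_{LT,\infty}, E)$; by irreducibility the kernel of each coordinate projection is $0$ or everything, and they cannot all be everything, so projection to a suitable single factor is injective. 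This gives the desired injection $B(\rho_p) \otimes_E \rho_p \hookrightarrow H^1(\mc{M}_{LT,\infty}, E)$. The step I expect to be the main obstacle is the descent along the $D^{\times}(\mb{Q})$-quotient in the adic/perfectoid setting: making precise the Hochschild--Serre spectral sequence, identifying $H^0(\mc{M}_{LT,\infty},E)$, and checking that localisation, passage to $[\mf{p}]$, and the limit over $s$ defining the completed cohomology all commute with this pullback. The matching of multiplicity spaces is, strictly, a consequence of Jacquet--Langlands, but for the bare injection it suffices that the composite be nonzero, which the irreducibility argument supplies.
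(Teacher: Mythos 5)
Your proposal is essentially correct, and its first and last steps coincide with the paper's own proof: the paper likewise feeds Theorem 4.2 into the localised excision injection $H^1(X,E)_{(\pi)}[\mf{p}]\hookrightarrow H^1(X_{\sss},E)[\mf{p}]$ (using, as you make explicit, that the $[\mf{p}]$-eigenspace lies entirely in the $(\pi)$-block), and it ends with the same irreducibility argument that you spell out, left implicit in the paper. The genuine divergence is the descent from $X_{\sss}$ to $\mc{M}_{LT,\infty}$. Where you pull back along the cover $\mc{M}_{LT,\infty}\times\GL_2(\mb{A}_f^p)/K^p \ra X_{\sss}$ and kill the Hochschild--Serre error term $H^1(D^{\times}(\mb{Q}),H^0(\cdot))$ after localisation at $\pi$, the paper instead asserts a K\"unneth-type identification
$$H^1(X_{\sss},E)=\left(H^1(\mc{M}_{LT,\infty},E)\,\widehat{\otimes}_E\,\mc{S}\right)^{D^{\times}(\mb{Q}_p)},$$
where $\mc{S}$ is the space of $p$-adic quaternionic forms of tame level $K^p$: it trades the quotient by the discrete group $D^{\times}(\mb{Q})$ for invariants under the $p$-adic group $D^{\times}(\mb{Q}_p)$, and concludes because $\GL_2(\mb{Q}_p)\times G_{\mb{Q}_p}$ acts through the first tensor factor. (The paper also shrinks the level at $\Sigma_0$ via Casselman's theorem so that the multiplicity space becomes one-dimensional; you keep the full multiplicity space and dispose of it at the end by your projection argument, which is harmless and even slightly more economical.) Your route buys robustness: you never need the exact completed-tensor-product decomposition, whose justification at infinite level the paper omits, only the low-degree Hochschild--Serre sequence and a vanishing statement. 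The paper's route buys precision: it records the multiplicity space in terms of quaternionic forms, Jacquet--Langlands style. Both rest on the same unproven-in-detail geometric input, namely that Proposition 2.5 (your ``Proposition 2.7'') can be made cohomologically effective at infinite level with $\mc{O}/\varpi^s$-coefficients compatibly with the limit over $s$; you flag this, and the paper is no less terse about it.

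One point you should repair: you cannot literally apply the localisation functor $(-)_{(\pi)}$ to the kernel $H^1(D^{\times}(\mb{Q}),H^0(\cdot))$, because that functor is only defined on (smooth or Banach) admissible representations, and the kernel is not obviously admissible. The fix is to bypass localisation there: the $\GL_2(\mb{Q}_p)$-action on $H^0(\mc{M}_{LT,\infty}\times\GL_2(\mb{A}_f^p)/K^p,E)$ factors through the determinant (as you note, the action on $\pi_0(\mc{M}_{LT,\infty})$ does), hence so does the induced action on the group cohomology; thus every vector of the kernel is fixed by $\SL_2(\mb{Q}_p)$. On the other hand $\left(B(\rho_p)\otimes_E\rho_p\otimes_E\pi_{\Sigma_0}(\rho)^{K_{\Sigma_0}}\right)^{\SL_2(\mb{Q}_p)}=0$, since $B(\rho_p)^{\SL_2(\mb{Q}_p)}$ is a closed $\GL_2(\mb{Q}_p)$-stable subspace of the topologically irreducible, infinite-dimensional $B(\rho_p)$, and it cannot be everything (an admissible topologically irreducible representation on which the action factors through $\det$ would be a character). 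Hence the image of your embedding meets the kernel trivially, which is all the injectivity you need.
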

\begin{proof}
Let $\pi$ be the mod $p$ representation of $\GL_2(\mb{Q}_p)$ corresponding to $\bar{\rho}_p$ by the mod $p$ local Langlands correspondence. It is a supersingular representation by our assumption that $\bar{\rho}_p$ is absolutely irreducible. Let $\mf{p}$ be a prime ideal of $\mb{T} _{\Sigma}$ associated to $\rho$, where $\Sigma =\Sigma _0 \cup \{p\}$ is some finite set which contains $p$ and all the primes at which $\rho$ is ramified. As above we have 
$$H^1(X,E) _{(\pi)} \hookrightarrow H^1 (X_{\sss}, E)$$
and hence also
$$H^1(X,E) _{(\pi)}[\mf{p}] \hookrightarrow H^1 (X_{\sss}, E)[\mf{p}]$$
Theorem 4.2 implies that (we keep track only of $G_{\mb{Q}_p}$-action instead of $G_{\mb{Q}}$)
$$ B(\rho _p) \otimes _E \rho _p \otimes _E \pi _{\Sigma _0}(\rho)^{K_{\Sigma _0}} \hookrightarrow H^1 (X_{\sss}, E)[\mf{p}]$$
Let $K_{\Sigma _0}'$ be a compact open subgroup of $\prod _{l \in \Sigma _0} \GL_2(\mb{Q}_l)$ for which we have $\dim \pi _{\Sigma _0}(\rho) ^{K_{\Sigma _0}'} =1$ where the dimension is over $E$. Such a subgroup always exists by classical results of Casselman (see \cite{cas}). Hence we have
$$ B(\rho _p) \otimes _E \rho _p \hookrightarrow H^1 (X_{\sss}, E)[\mf{p}] ^{K_{\Sigma _0}'}$$
By Kunneth formula and Proposition 2.5 (the $p$-adic uniformisation of Rapoport-Zink) we get that 
$$H^1 (X_{\sss}, E) = \left( H^1(\mc{M}_{LT, \infty}, E) \widehat{\otimes} _E \mc{S} \right)^{D^{\times}(\mb{Q}_p)}$$
where we have denoted by $\mc{S}$ the $p$-adic quaternionic forms of level $K^p$
$$\widehat{H}^0(D^{\times}(\mb{Q}) \backslash D^{\times}(\mb{A})/K^p, E) = \left( \varprojlim _s \varinjlim _{K_p} H^0(D^{\times}(\mb{Q}) \backslash D^{\times}(\mb{A}) / K_pK^p, \mc{O} / \varpi ^s \mc{O})\right) \otimes _{\mc{O}} E$$
where $K_p$ runs over compact open subgroups of $D^{\times}(\mb{Q}_p)$. As $\GL_2(\mb{Q}_p)$ and $G_{\mb{Q}_p}$ act on $H^1(X_{\sss}, E)$ through $H^1(\mc{M}_{LT, \infty}, E)$ we conclude by the preceding discussion that 
$$ B(\rho _p) \otimes _E \rho _p \hookrightarrow H^1(\mc{M}_{LT, \infty}, E)$$
as wanted.
\end{proof}

\subsection{Cohomology with compact support}
We show that the cohomology with compact support of the Lubin-Tate tower does not contain any $p$-adic representations which reduce to mod $p$ supersingular representations. Recall we have morphisms
$$j: X _{\sss} \hookrightarrow X$$
and
$$i: X _{\ord} \ra X$$
which give an exact sequence for any \'etale sheaf $F$ on $X$
$$0 \ra j_! j^* F \ra F \ra i_* i^* F \ra 0$$
This leads to an exact sequence of the cohomology
$$... \ra H^0 (X_{\ord}, \mc{O} / \varpi ^s \mc{O}) \ra H^1 _{c} (X _{\sss}, \mc{O} / \varpi ^s \mc{O}) \ra H^1 (X, \mc{O} / \varpi ^s \mc{O}) \ra H^1 (X_{\ord}, \mc{O} / \varpi ^s \mc{O}) \ra  ...$$
Because $H^1 (X, \mc{O} / \varpi ^s \mc{O})$ is smooth admissible as a $\mc{O} / \varpi ^s \mc{O}$-representation of $\GL_2(\mb{Q}_p)$ (by the result of Emerton) and $H^0 (X_{\ord}, \mc{O} / \varpi ^s \mc{O})$ is smooth admissible as a $\mc{O} / \varpi ^s \mc{O}$-representation of $\GL_2(\mb{Q}_p)$ because at each finite level $X_{\ord}$ has a finite number of connected components, we infer that also $H^1 _{c} (X _{\sss}, \mc{O} / \varpi ^s \mc{O})$ is smooth admissible (as the category of admissible $\mc{O} / \varpi ^s \mc{O}$-representations is a Serre subcategory of smooth $\mc{O} / \varpi ^s \mc{O}$-representations). Passing to the limit with $s$, we infer that $H^1 _{c} (X _{\sss}, E)$ is Banach admissible over $E$. This means that we can localise $H^1 _{c} (X _{\sss}, E)$ at supersingular representations.

\medskip

Let $\pi$ be a supersingular $k$-representation of $\GL_2(\mb{Q}_p)$, where $k$ is the residue field of $K$. Observe that if $H^1 _{c} (X _{\sss}, E) _{(\pi)} \not =0$, then also its reduction $H^1 _{c} (X _{\sss}, k) _{(\pi)}$ would be non-zero. But Theorem 8.2 in \cite{cho} states that $H^1 _{c} (X _{\sss}, k) _{(\pi)} = 0$. Hence we get
\begin{theo} For any supersingular $k$-representation $\pi$ of $\GL_2(\mb{Q}_p)$ we have
$$H^1 _{c} (X _{\sss}, E) _{(\pi)} = 0$$
In particular
$$H^1 _{c} (\mc{M} _{LT, \infty}, E) _{(\pi)} = 0$$
\end{theo}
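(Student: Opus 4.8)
The plan is to reduce the statement over $E$ to the already-established mod $p$ statement of \cite{cho} via a reduction argument, exactly as foreshadowed in the paragraph preceding the theorem. First I would establish that $H^1_c(X_{\sss}, E)$ is a Banach admissible $E$-representation of $\GL_2(\mb{Q}_p)$, so that the localisation functor at a supersingular $\pi$ makes sense on it. This follows from the excision exact sequence
$$ \ldots \ra H^0(X_{\ord}, \mc{O}/\varpi^s\mc{O}) \ra H^1_c(X_{\sss}, \mc{O}/\varpi^s\mc{O}) \ra H^1(X, \mc{O}/\varpi^s\mc{O}) \ra \ldots $$
together with the smooth admissibility of $H^1(X,\mc{O}/\varpi^s\mc{O})$ (Emerton) and of $H^0(X_{\ord}, \mc{O}/\varpi^s\mc{O})$ (finiteness of connected components of the ordinary locus at each finite level), using that admissible representations form a Serre subcategory; passing to the limit in $s$ upgrades this to Banach admissibility over $E$.

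Once localisation is available, the key step is the reduction to characteristic $p$. I would argue that if $H^1_c(X_{\sss}, E)_{(\pi)}$ were nonzero, then its mod $p$ reduction $H^1_c(X_{\sss}, k)_{(\pi)}$ would also be nonzero, since localisation is compatible with reduction and a nonzero $E$-Banach representation whose lattice localises nontrivially cannot reduce to zero in the $\pi$-component. This is the crux of the argument. Then I would invoke Theorem 8.2 of \cite{cho}, which asserts precisely that $H^1_c(X_{\sss}, k)_{(\pi)} = 0$ for every supersingular $\pi$, to derive a contradiction and conclude $H^1_c(X_{\sss}, E)_{(\pi)} = 0$.

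For the final assertion about $\mc{M}_{LT,\infty}$, I would transfer vanishing from $X_{\sss}$ to the Lubin-Tate tower using the $p$-adic uniformisation of Rapoport-Zink (Proposition 2.5) and the K\"unneth description
$$ H^1_c(X_{\sss}, E) = \left( H^1_c(\mc{M}_{LT,\infty}, E) \widehat{\otimes}_E \mc{S} \right)^{D^\times(\mb{Q}_p)}, $$
exactly as in the proof of Theorem 4.3, where $\GL_2(\mb{Q}_p)$ acts on $H^1_c(X_{\sss}, E)$ only through the $\mc{M}_{LT,\infty}$-factor. Since the localisation at $\pi$ is a functor in the $\GL_2(\mb{Q}_p)$-action and commutes with the invariants and the completed tensor product with the prime-to-$p$ data $\mc{S}$, vanishing of the left side forces vanishing of $H^1_c(\mc{M}_{LT,\infty}, E)_{(\pi)}$.

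The main obstacle I anticipate is making rigorous the compatibility of the localisation functor with reduction modulo $\varpi$ — that is, justifying the implication ``$H^1_c(X_{\sss}, E)_{(\pi)} \neq 0 \Rightarrow H^1_c(X_{\sss}, k)_{(\pi)} \neq 0$''. One must choose a $\GL_2(\mb{Q}_p)$-stable $\mc{O}$-lattice in the Banach space, check that localisation at $\pi$ (defined via Paskunas' block decomposition for smooth admissible torsion representations) passes to the inverse limit and interacts correctly with $[\varpi]$-torsion, and verify that a nonzero localised $E$-representation has nonzero reduced localised mod $p$ cohomology rather than dying in the reduction. The rest of the argument is essentially a transport of the mod $p$ vanishing through the already-constructed uniformisation isomorphism.
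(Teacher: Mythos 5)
Your proposal follows the paper's proof essentially verbatim: the same excision sequence and Serre-subcategory argument to get Banach admissibility of $H^1_c(X_{\sss},E)$, the same reduction-mod-$\varpi$ step to invoke Theorem 8.2 of \cite{cho}, and the same appeal to the Rapoport-Zink uniformisation for the Lubin-Tate statement. If anything, you are more candid than the paper about the one delicate point (compatibility of localisation with reduction of a lattice), which the paper dispatches with a bare ``observe that''.
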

\begin{proof} The first part follows from the preceding discussion, the second part follows from the Rapoport-Zink uniformisation.
\end{proof}

This theorem implies that for any continuous $\rho _p : G_{\mb{Q}_p} \ra \GL_2(E)$ which has an absolutely irreducible reduction $\bar{\rho}_p: G_{\mb{Q}_p} \ra \GL_2(k)$, the $\GL_2(\mb{Q}_p)$-representation $B(\rho _p)$ associated to $\rho _p$ by the $p$-adic Local Langlands correspondence does not appear in $H^1 _{c} (\mc{M} _{LT, \infty}, E)$. Nevertheless, we believe that it appears in $H^2 _{c} (\mc{M} _{LT, \infty}, E)$, though we could not prove it.

\subsection{Analytic cohomology}

Let us explain, why we do not work with the de Rham cohomology as would the folklore conjecture suggest (to be precise: we do, but we work only with the structure sheaf as all the other differentials vanish as we show below). The reason for that is that there are no good finiteness results for de Rham cohomology of adic spaces which are not of finite type (as our Lubin-Tate perfectoid $\mc{M} _{LT, \infty}$). Moreover, it seems that the (continuous) de Rham cohomology does not suit well perfectoid spaces. Indeed, $H^i(X, \hat{\Omega} ^j _{X})$ is zero for any perfectoid space $X$ and sheaves of continuous differentials $\hat{\Omega} ^j _X$, $j>0$. We define here $\hat{\Omega} ^j _X$ locally on $\Spa(R,R^+)$ over $(K,K^+)$ by firstly defining
$$\hat{\Omega} ^j _{R^+ / K^+} = \varprojlim _n \Omega ^j _{(R^+ / p^n) / (K^+/p^n)}$$
and then $\hat{\Omega} ^j _{\Spa(R,R^+)} = \hat{\Omega} ^j _{R / K} = \hat{\Omega} ^j _{R^+ / K^+} [1/p]$. Thus, it is enough to prove the statement for affinoid perfectoids $X = \Spa(R, R^+)$. We can further reduce ourselves to the case $i = 0$ by using the Cech complex associated to some rational covering of $X$ (which will be a covering by affinoid perfectoids by Corollary 6.8 of \cite{sch1}). Hence, we have to show that global sections of $\hat{\Omega} _X ^{j}$ are zero. It suffices to show that $\hat{\Omega} _{R^+/ K^+} ^{j}$ is almost zero. This follows from induction, as for $n=1$ the sheaf $\Omega ^j _{(R^+ / p) / (K^+/p)}$ is identically zero, and for $n>1$ we conclude using an exact sequence, as in the proof of Theorem 5.10 of \cite{sch1}:
$$0 \ra R^+/p \ra R^+ / p^n \ra R^+/p^{n-1} \ra 0$$ 
Let us remark that this reasoning also implies that sheaves $\hat{\Omega} ^j _X$ are zero on a perfectoid space $X$ for $j>0$. It is enough to check it at stalks where we have $\hat{\Omega} ^j _{X,x} = \varinjlim _{x \in U} \hat{\Omega} ^j _{X}(U)$ and $U$ runs over rational affinoid subsets of $X$ containing $x$. As such subsets are perfectoid (Corollary 6.8 of \cite{sch1}) we have $\hat{\Omega} ^j _X (U) = 0$ and hence the result. 
  
%As $X$ is perfectoid we can pass to the tilt $X^{\flat}$ in characteristic $p$. As the analytic topoi of $X$ and $X^{\flat}$ are equivalent (see \cite{sch1}, both topological spaces are homeomorphic), it is enough to prove $H^0(X^{\flat}, \Omega ^j _{X^{\flat}}) = 0$ for $j >0$ (we use here the fact that $(\Omega ^j _{X})^{\flat} = \Omega ^j _{X^{\flat}}$ which ). Now it is enough to observe that if $dx \in \Omega ^1 _{X^{\flat}}$ and $y$ is such that $x = y^p$ (we can always find such $y$ as $X^{\flat}$ is perfectoid), then $dx = pdy^{p-1} = 0$ as we are in characteristic $p$. We conclude similarly for higher differentials. Let us remark that this reasoning also implies that sheaves $\Omega ^j _X$ are zero on a perfectoid space $X$ for $j>0$. It is enough to check it at stalks where we have $\Omega ^j _{X,x} = \varinjlim _{x \in U} \Omega ^j _{X}(U)$ and $U$ runs over rational affinoid subsets of $X$ containing $x$. As such subsets are perfectoid (Corollary 6.8 of \cite{sch1}) we have $\Omega ^j _X (U) = 0$ and hence the result. 

%This follows directly from the fact that $X$ is perfectoid as the Frobenius is surjective on $R^{\circ}/p$ and so every differential on $X$ (almost) vanishes as wanted. For example, if $dx \in \Omega ^1 _X$, then take $y$ such that $x = y^p$ to see that $dx = pdy^{p-1}$. Continuing this process shows that $dx$ is $p^{\infty}$-divisible and hence almost zero. The same holds for any differential in $\Omega _{X} ^j$.

\medskip

As $\mc{M} _{LT, \infty}$ is a perfectoid space by \cite{sw}, the above reasoning applies, showing that de Rham cohomology of $\mc{M} _{LT, \infty}$ reduces to the study of the cohomology with values in the structure sheaf. This is exactly the analytic cohomology we consider. By using recent results of Scholze, it seems natural to work with the analytic cohomology (i.e. topology defined by open subsets). We review this below. We believe that the 'folklore conjecture' should be understood as the statement that the $p$-adic local Langlands correspondence appears in the analytic cohomology of the appropiate Rapoport-Zink space at infinity. We also remark that the same applies to Shimura varieties at the infinite level, which are perfectoid spaces by \cite{sch3}.

\medskip

If $Z$ is any adic space, we denote by $Z_{an}$ its analytic topos which arises from the topology of open subsets. For any (coherent) sheaf $\mc{F}$ on $Z$, we write $H^i _{an}(Z, \mc{F})$ for the $i$-th cohomology group of $Z _{an}$ with values in $\mc{F}$.

\medskip

%We let $\mc{I} \subset \mc{O}  _{X_{\Sigma}}$ be the ideal sheaf of the boundary of $X_{\Sigma}$. 
By Theorem IV.2.1 of \cite{sch3} (where we pass to the limit with $\mb{Z} / p^n \mb{Z}$ and then use the reasoning from the proof of Theorem 3.20 in \cite{sch4} to descent from the pro-\'etale site to the \'etale site) we have an isomorphism
$$H^1(X, E) \widehat{\otimes} _E C \simeq H^1 _{an} (X, \mc{O}_X)$$
which is $\GL_2(\mb{Q}_p)$-equivariant and also equivariant with respect to the Hecke action of $\mb{T}_{\Sigma}$. 

%Once again we look at the exact sequence, this time at the analytic level:
%$$... \ra H^0 _{an} (X_{\Sigma, ss}, \mc{O} _{X_{\Sigma, ss}}) \ra H^1 _{an, X_{\Sigma, ord}} (X, \mc{I}) \ra H^1 _{an} (X_{\Sigma}, \mc{I} ) \ra H^1 _{an} (X_{\Sigma, ss}, \mc{O} _{X_{\Sigma, ss}}) \ra  ...$$
%where we have used the fact that $\mc{I} _{|X_{\Sigma, ss}}$ is equal to the structure sheaf $\mc{O} _{X _{\Sigma, ss}}$ because $X_{\Sigma, ss}$ does not meet the boundary. 

\medskip

If we were to use the same reasoning as for the $p$-adic completed cohomology (i.e. some exact sequence of analytic cohomology and localisation at a supersingular representation) to show that the $p$-adic local Langlands correspondence appears in the analytic cohomology of the Lubin-Tate tower at infinity, then we would have to start by proving admissibility of the cohomology groups. Unfortunately, this is not true. By the comparison theorem of Scholze we get that $H^1 _{an} (X, \mc{O}_X)$ is a Banach admissible $E$-represention, but $H^0 _{an} (X_{\sss}, \mc{O} _{X_{\sss}})$ is not admissible (and it is not even clear whether it is a Banach space). In order to prove that, it is enough to prove it for $H^0 _{an} (\mc{M} _{LT, \infty}, \mc{O} _{\mc{M}_{LT, \infty}})$ by the $p$-adic uniformisation of Rapoport-Zink.

\begin{prop}
The $\GL_2(\mb{Q}_p)$-representation $H^0 _{an} (\mc{M} _{LT, \infty}, \mc{O} _{\mc{M}_{LT, \infty}})$ is not admissible.
\end{prop}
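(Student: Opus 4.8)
The plan is to contradict admissibility by producing a single open compact subgroup under which the space of invariant vectors is infinite-dimensional. Recall the standard consequence of admissibility: a Banach admissible representation $V$ has finite-dimensional invariants under every open compact subgroup $K_p$. Concretely, if $V^{\circ}\subset V$ is a $\varpi$-adically admissible lattice, then $(V^{\circ}/\varpi V^{\circ})^{K_p}$ is finite by smooth admissibility, and since $(V^{\circ})^{K_p}/\varpi (V^{\circ})^{K_p}$ injects into it, topological Nakayama forces $(V^{\circ})^{K_p}$ to be finitely generated, so that $V^{K_p}=(V^{\circ})^{K_p}[1/p]$ is finite-dimensional. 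It therefore suffices to exhibit one open compact $K_p\subset\GL_2(\mb{Q}_p)$ for which $H^0_{an}(\mc{M}_{LT,\infty},\mc{O})^{K_p}$ is infinite-dimensional.

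I would take $K_p=\GL_2(\mb{Z}_p)$. Since $\mc{M}_{LT,\infty}\sim\varprojlim_{K_p}\mc{M}_{LT,K_p}$ and the base of the tower is the quotient $\mc{M}_{LT,\infty}/\GL_2(\mb{Z}_p)=\mc{M}_{LT,\GL_2(\mb{Z}_p)}$, every global function pulled back along the projection $\mc{M}_{LT,\infty}\ra\mc{M}_{LT,\GL_2(\mb{Z}_p)}$ is fixed by $\GL_2(\mb{Z}_p)$. This projection is surjective, so pullback of functions is injective, and hence $H^0_{an}(\mc{M}_{LT,\infty},\mc{O})^{\GL_2(\mb{Z}_p)}$ contains a copy of $H^0(\mc{M}_{LT,\GL_2(\mb{Z}_p)},\mc{O})$. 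Equivalently, the meaning of $\sim$ (Definition 2.4.1 of \cite{sw}) ensures that $\varinjlim_{K_p}H^0(\mc{M}_{LT,K_p},\mc{O})$ has dense image in $H^0_{an}(\mc{M}_{LT,\infty},\mc{O})$ in a $\GL_2(\mb{Q}_p)$-equivariant way, so the level-$\GL_2(\mb{Z}_p)$ functions land among the $\GL_2(\mb{Z}_p)$-invariants.

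Finally I would invoke the classical description of the base of the Lubin-Tate tower: $\mc{M}_{LT,\GL_2(\mb{Z}_p)}$ is the open unit disk over $C$ (the generic fiber of the deformation space of a one-dimensional formal group of height two, by Gross-Hopkins). Its ring of global functions is the space of power series $\sum_{n\geq 0}a_nT^n$ with $a_n\in C$ and $|a_n|r^n\ra 0$ for every $r<1$, in which the monomials $1,T,T^2,\dots$ are linearly independent; it is thus infinite-dimensional over $C$. Combined with the previous step, $H^0_{an}(\mc{M}_{LT,\infty},\mc{O})^{\GL_2(\mb{Z}_p)}$ is infinite-dimensional, contradicting admissibility.

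The main obstacle is the identification in the second step: one must check that passing to $\GL_2(\mb{Z}_p)$-invariants at the infinite level genuinely recovers the finite-level functions rather than collapsing them. This rests on the precise content of $\sim$ — that the transition maps realize $\mc{M}_{LT,K_p}$ as $\mc{M}_{LT,\infty}/K_p$ and induce a $\GL_2(\mb{Q}_p)$-equivariant map with dense image on global sections — together with injectivity of pullback along the surjective projection. Once this is secured, the infinite-dimensionality of the invariants, and hence the failure of admissibility, follows at once.
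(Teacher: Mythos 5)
Your proof is correct, and it rests on the same core mechanism as the paper's --- exhibiting infinitely many invariant functions by pulling back functions from finite level --- but the implementation is genuinely different. The paper restricts to a geometrically connected component of $\mc{M}_{LT,\infty}$ via Weinstein's explicit perfectoid description (the algebra $A$ with tilt $\bar{\mb{F}}_p[[X_1^{1/p^{\infty}},X_2^{1/p^{\infty}}]]$), fixes the specific lattice $A\otimes_{\mc{O}_{K_{\infty}}}\mc{O}_C$, and contradicts admissibility integrally: the mod-$p$ reduction of the $K$-invariants contains the reduction of the ring of integral functions at finite level $K$, which is finite over $\mc{O}_C[[X_1,X_2]]$ and hence far from finite-dimensional. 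You avoid Weinstein's description altogether: you first reduce Banach admissibility to the lattice-independent statement that $V^{K_p}$ is finite-dimensional over $E$ for every open compact $K_p$ (the injection $(V^{\circ})^{K_p}/\varpi(V^{\circ})^{K_p}\hookrightarrow (V^{\circ}/\varpi V^{\circ})^{K_p}$ plus topological Nakayama; note that Nakayama needs $(V^{\circ})^{K_p}$ to be $\varpi$-adically complete, which holds because it is closed in $V^{\circ}$ and, by torsion-freeness, its subspace topology coincides with its $\varpi$-adic topology), and then you contradict this rationally using only the classical description of the base of the tower as (a disjoint union of copies of) the open unit disk, embedded into the invariants by pullback along the $\GL_2(\mb{Z}_p)$-invariant projection, whose injectivity follows from surjectivity of $\mc{M}_{LT,\infty}\ra\mc{M}_{LT,\GL_2(\mb{Z}_p)}$ (fibres are inverse limits of nonempty finite sets, the transition maps being finite \'etale and surjective). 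One caveat: your parenthetical appeal to density of $\varinjlim_{K_p}H^0(\mc{M}_{LT,K_p},\mc{O})$ in $H^0_{an}$ is both unnecessary and delicate (the density in the definition of $\sim$ is local on affinoids, and the finite-level spaces here are not affinoid), so the pullback-injectivity argument should be regarded as the actual proof, which it is. What your route buys is elementarity and a clean treatment of the existential quantifier over lattices in the definition of Banach admissibility, since $V^{K_p}$ does not depend on a choice of lattice; what the paper's route buys is finer, integral information on an explicit component --- for instance its observation that the infinite-level function space contains much more than the finite levels (unbounded functions) --- which your softer argument does not see.
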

\begin{proof}
In Section 2 (see especially 2.10) in \cite{we}, Weinstein gives an explicit description of the geometrically connected components of $\mc{M} _{LT, \infty}$. Each of them is isomorphic to $\Spa (A \otimes _{\mc{O}_{K_{\infty}}} C, A \otimes _{\mc{O} _{K_{\infty}}} \mc{O}_C)$, where $K_{\infty}$ is the Lubin-Tate extension of $\mb{Q}_p$ (see Section 2.3 of \cite{we}; we fix an embedding $K_{\infty} \hookrightarrow C$) and $A$ is a perfectoid $K_{\infty}$-algebra with a tilt (Corollary 2.9.11 of \cite{we})
$$A ^{\flat} \simeq \bar{\mb{F}}_p [[ X_1 ^{1/ p^{\infty}}, X_2 ^{1/p^{\infty}}]]$$
Hence, in $H^0 _{an} (\mc{M} _{LT, \infty}, \mc{O} _{\mc{M}_{LT, \infty}})$ appears $A \otimes _{\mc{O}_{K_{\infty}}} C$ (and in fact much more as this is the set of all unbouded funtions on the Lubin-Tate perfectoid). We have an action of $\GL_2(\mb{Z}_p)$ on $A$. Let $K$ be any compact open subgroup of $\GL_2(\mb{Z}_p)$. If $H^0 _{an} (\mc{M} _{LT, \infty}, \mc{O} _{\mc{M}_{LT, \infty}})$ were admissible, then in particular for the lattice $A \otimes _{\mc{O}_{K_{\infty}}} \mc{O} _C$ in $A \otimes _{\mc{O}_{K_{\infty}}} C$, the reduction of $K$-invariants $(A \otimes _{\mc{O}_{K_{\infty}}} \bar{\mb{F}}_p)^K$ would be of finite dimension over $\bar{\mb{F}}_p$ (by the very definition, see Definition 2.7.1 of \cite{em4}). This is not possible. Indeed, observe that $A^K$ contains (and probably equals to but we do not need it) the ring of integral analytic functions on the Lubin-Tate space of $K$-level, which is a finite ring over the ring $\mc{O} _{C}[[X_1,X_2]]$ of power-series over $\mc{O}_C$.
\end{proof}

This means that we cannot use the localisation functor and deduce our result from the global results of Emerton. Hence, for now, we can only state a conjecture, which we believe to be a correct version of the folklore conjecture.

\begin{conj} Let $\rho _p : G_{\mb{Q}_p} \ra \GL_2(E)$ be a continuous de Rham Galois representation. Then, there is a non-zero $\GL_2(\mb{Q}_p)$-equivariant injection
$$B(\rho_p) \hookrightarrow H^1 _{an} (\mc{M} _{LT, \infty}, \mc{O} _{\mc{M}_{LT, \infty}}) $$
\end{conj}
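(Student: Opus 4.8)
The plan is to transport the \'etale statement of Theorem 4.3 to the analytic side by a comparison theorem, rather than by rerunning the localisation argument. Concretely, I would first try to establish for $\mc{M}_{LT,\infty}$ (equivalently, by Proposition 2.5, for $X_{\sss}$) an analogue of the comparison isomorphism $H^1(X,E)\widehat{\otimes}_E C \simeq H^1_{an}(X,\mc{O}_X)$ used above for the proper curve, namely a $\GL_2(\mb{Q}_p)\times G_{\mb{Q}_p}$-equivariant identification
$$H^1(\mc{M}_{LT,\infty},E)\widehat{\otimes}_E C \simeq H^1_{an}(\mc{M}_{LT,\infty},\mc{O}_{\mc{M}_{LT,\infty}}).$$
Granting such a comparison, Theorem 4.3 supplies $B(\rho_p)\otimes_E\rho_p \hookrightarrow H^1(\mc{M}_{LT,\infty},E)$; after $\widehat{\otimes}_E C$ and the comparison this becomes $(B(\rho_p)\otimes_E\rho_p)\widehat{\otimes}_E C \hookrightarrow H^1_{an}(\mc{M}_{LT,\infty},\mc{O}_{\mc{M}_{LT,\infty}})$. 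Forgetting the Galois action and choosing a nonzero $C$-vector in $\rho_p\otimes_E C$ splits off a $\GL_2(\mb{Q}_p)$-stable copy of $B(\rho_p)\widehat{\otimes}_E C$, and restricting to the $E$-subspace $B(\rho_p)\otimes 1$ yields the sought $\GL_2(\mb{Q}_p)$-equivariant injection $B(\rho_p)\hookrightarrow H^1_{an}(\mc{M}_{LT,\infty},\mc{O}_{\mc{M}_{LT,\infty}})$.

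The de Rham hypothesis on $\rho_p$ should be what makes this injection nonzero. For de Rham $\rho_p$ the Banach representation $B(\rho_p)$ has a nonzero subspace of locally algebraic vectors, matching under the $p$-adic correspondence the classical smooth parameter $\pi_{\mathrm{sm}}(\rho_p)$; by non-abelian Lubin-Tate theory this smooth representation is realised in the coherent cohomology of $\mc{M}_{LT,K_p}$ at finite level. I would therefore aim to produce the embedding first on locally algebraic vectors, using the finite-level realisation of $\pi_{\mathrm{sm}}(\rho_p)$ inside $H^1_{an}(\mc{M}_{LT,K_p},\mc{O})$, and then pass to the completed limit, checking that the limit remains inside the analytic $H^1$ and carries the full $\GL_2(\mb{Q}_p)$-action. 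The de Rham condition guarantees that this locally algebraic subspace is nonzero, which is exactly what prevents the injection from being trivial.

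The main obstacle is the failure recorded in Proposition 4.5: the localisation functor is simply unavailable in the analytic setting, because $H^0_{an}(X_{\sss},\mc{O})$, the space of all unbounded functions on the supersingular locus, is not admissible and need not even be a Banach space. Thus the exact-sequence-plus-localisation mechanism behind Theorem 4.3 has no literal analogue, and the entire argument rests on the comparison isomorphism for the open, non-proper perfectoid $\mc{M}_{LT,\infty}$. Scholze's primitive comparison theorem is available only for proper (or compactified) spaces, and it is precisely for open spaces that \'etale and analytic cohomology diverge; isolating the finite-dimensional de Rham contribution in $H^1$ while controlling the enormous $H^0$ is the difficulty I do not expect the methods of this paper to settle. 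A plausible way around it would be to replace $\mc{O}$-cohomology by a compactly supported or overconvergent variant for which admissibility is restored, and then to prove that this variant computes the same $H^1_{an}$; I regard establishing such a comparison, or finding a genuine substitute for the localisation functor, as the essential open problem.
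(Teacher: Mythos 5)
The statement you are addressing is Conjecture 4.6 of the paper: it has no proof there, and the paper explains precisely why its own machinery stops short --- Proposition 4.5 shows that $H^0_{an}(\mc{M}_{LT,\infty},\mc{O}_{\mc{M}_{LT,\infty}})$ is not admissible (it is not even clearly a Banach space), so the exact-sequence-plus-localisation mechanism that drives Theorem 4.3 cannot be run on the analytic side. Your proposal correctly identifies this obstruction, and to your credit you flag the decisive missing ingredient yourself: an analogue of the comparison isomorphism $H^1(X,E)\widehat{\otimes}_E C \simeq H^1_{an}(X,\mc{O}_X)$ for the non-proper perfectoid $\mc{M}_{LT,\infty}$ (equivalently, for $X_{\sss}$). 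That comparison comes from Scholze's primitive comparison theorem and is available only for the proper space $X$; for open spaces the \'etale and analytic cohomologies genuinely diverge, as the paper's Proposition 4.5 already illustrates at the level of $H^0$. Since your entire first paragraph is conditional on this unproved comparison, what you have is a reduction of one open problem to another, not a proof.

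There is also a gap you do not flag: a generality mismatch. The conjecture concerns an arbitrary continuous de Rham representation $\rho_p : G_{\mb{Q}_p} \ra \GL_2(E)$, whereas Theorem 4.3 --- the input you want to transport --- applies only to $\rho_p = \rho|_{G_{\mb{Q}_p}}$ for $\rho$ a global pro-modular representation with $\bar{\rho}_p$ absolutely irreducible. Even granting your comparison isomorphism, your argument would produce the injection only for such globally-arising $\rho_p$; the paper itself singles out the passage from global to purely local hypotheses as a separate open problem in its ``Final remarks.'' Your second paragraph (locally algebraic vectors, realising the smooth parameter $\pi_{\mathrm{sm}}(\rho_p)$ in coherent cohomology at finite level, then completing) sketches a genuinely local alternative, closer in spirit to the Breuil--Strauch conjecture mentioned after Conjecture 4.6, but it is stated only at the level of intent: neither the paper nor your proposal supplies the finite-level coherent realisation, the compatibility of transition maps, or an argument that the locally algebraic subspace survives, and remains nonzero, in the completed $H^1_{an}$ at infinite level. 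The honest conclusion is that the statement remains a conjecture; your write-up is a useful map of the obstructions, not a proof.
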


Observe that in fact we can state a similar conjecture for $H^0 _{an} (\mc{M} _{LT, \infty}, \mc{O} _{\mc{M}_{LT, \infty}})$ instead of $H^1 _{an}$. A priori, it is not clear which one should be true or whether both are. The advantage of working with $H^0 _{an}$ should be the fact that it is quite explicit by the work of Weinstein. 

\medskip

Christophe Breuil has informed us that a similar conjecture was made by him and Matthias Strauch in 2006 (unpublished note). The difference was that on the left side they considered the locally analytic vectors of $B(\rho_p)$ while on the right side they had a cohomology of the Drinfeld tower at some finite level. Results toward this conjecture for special series appear in \cite{br}.

\medskip

We believe that there is also a more refined version of the folklore conjecture which truly realizes the $p$-adic local Langlands correspondence in the sense that in the analytic cohomology of the Lubin-Tate perfectoid should appear a tensor product of $B(\rho_p)$ with the associated $(\phi, \Gamma)$-module of $\rho _p$. We do not make precise here what kind of $(\phi, \Gamma)$-modules we consider and how the appropiate Robba ring acts on the Lubin-Tate perfectoid. We shall come back to those issues elsewhere.

\subsection{Final remarks}

Observe that our proof of Theorem 4.3 depends on the global data as we have to start with a global pro-modular Galois representation $\rho$. As our result is completely local, it is natural to ask whether the same thing holds for any absolutely irreducible Galois representation $\rho _p$ of $G_{\mb{Q}_p}$ which is not necessarily a restriction of some global $\rho$ (as in Conjecture 4.6).

\medskip

Another natural problem is to try to prove Theorem 4.3 without assuming that $\bar{\rho}_p$ is absolutely irreducible. This would require a more careful study of the cohomology of the ordinary locus.

\medskip

The most pertaining problem is whether one can reconstruct $B(\rho _p)$ from either the $p$-adic completed or the analytic cohomology of the Lubin-Tate tower and hence give a different proof of the $p$-adic local Langlands correspondence. This might be useful in trying to prove the existence of the $p$-adic correspondence for groups other than $\GL_2(\mb{Q}_p)$ as well as Theorem 4.3 for Galois representations $\rho _p$ not necessarily coming from global Galois representations.

\end{document}